\newcommand{\R}{\mathbb{R}}
\newcommand{\Z}{\mathbb{Z}}
\renewcommand{\hat}{\widehat}
\numberwithin{equation}{section}
\newtheorem{thm}{Theorem}[section]
\newtheorem{lem}[thm]{Lemma}
\newtheorem{prop}[thm]{Proposition}
\theoremstyle{remark}
\newtheorem{rem}{Remark}[section]
\newcommand{\Del}[1]{}
\begin{document}

\title{Small data global regularity for half-wave maps}
\author{Joachim Krieger and Yannick Sire}

\subjclass{35L05, 35B40}

\keywords{wave equation, fractional wave maps}

\begin{abstract}
We formulate the half-wave maps problem with target $S^2$ and prove global regularity in sufficiently high spatial dimensions for a class of small critical data in  Besov spaces. 
\end{abstract}

\maketitle

\section{The problem}
Let $u: \R^{n+1}\rightarrow S^2\hookrightarrow \R^3$ smooth, and assume that it converges to some $p\in S^2$ at spatial infinity. Further, assume that on each fixed time slice $\nabla_{t,x} u\in L^p(\R^n)$ for some $p\in (1,\infty)$. Denote by $\times$ the standard vectorial product in three dimensions. We call this a {\it{fractional wave map}}, provided it satisfies the following relation: 
\begin{equation}\label{eq:fracwm}
\partial_t u  = u \times (-\triangle)^{\frac{1}{2}}u
\end{equation}
Here we define the operator $(-\triangle)^{\frac{1}{2}}$ via 
\[
(-\triangle)^{\frac{1}{2}}u = -\sum_{j=1}^n (-\triangle)^{-\frac{1}{2}}\partial_j (\partial_j u),
\]
a specification necessary on account of the fact that $u$ does not vanish at infinity, but instead approaches some $p\in S^2$, while $\nabla u$ does vanish at infinity. In fact, the expression $(-\triangle)^{\frac{1}{2}}u$ under our current definition is then well-defined since $\nabla_{t,x}u(t, \cdot)\in L^p(\R^n)$ for some $p\in (1,\infty)$, for all $t$. 
\\

We note that the model \eqref{eq:fracwm} appears formally related to the much-studied Schroe-dinger Maps problem which can be written in the form
\[
\partial_t u = u\times \triangle u,
\]
and moreover, we shall see shortly that \eqref{eq:fracwm} also appears closely related to the classical Wave Maps problem with target $S^2$. We also note that we have a formally conserved quantity 
\begin{equation}\label{eq:enconserv}
E(t): = \int_{\R^n}\big|(-\triangle)^{\frac{1}{4}}u\big|^2\,dx,
\end{equation}
where we let $(-\triangle)^{\frac{1}{4}}u =  -\sum_{j=1}^n(-\triangle)^{-\frac{3}{4}}\partial_j(\partial_j u)$. Such kind of quantities have been considered in the works of Da Lio and Rivi\`ere in the study of fractional harmonic maps (see for instance  \cite{DLR1,DLR2,DaLio1}). We also note that on account of the results on fractional harmonic maps previously mentioned, this model moreover displays a very rich class of static solutions (see also \cite{MS}).

On the other hand, \eqref{eq:fracwm} scales just like Wave Maps, which means that in all dimensions $n\geq 2$ the problem \eqref{eq:fracwm} is formally supercritical. 
\\

We formulated the model \eqref{eq:fracwm} as a toy model in 2014, but have since learned from E. Lenzmann \footnote{The name of half-wave map was suggested by E. Lenzmann.} that it already exists in the physics literature. We learned from E. Lenzmann that the half-wave map equation arises as the continuum version of the  so-called integrable spin Calogero-Moser systems, which in turn comes from the completely integrable quantum spin systems called Haldane-Shastry systems\footnote{E. Lenzmann provided us with the following references \cite{haldane,shasty,hikami,blom} and we refer to his work for an account on the passage from the physics to the mathematical model.}.  Recent work by Lenzmann and Schikorra \cite{Le2} completely classifies the travelling wave solutions for this model in the critical case $n = 1$. 
\\

In the present note, our goal is to approach the issue of global solutions corresponding to small data, attempting to parallel the developments in \cite{Tat3}, \cite{T1}. We will see that \eqref{eq:fracwm} can be reformulated as a nonlinear wave type equation of the schematic form 
\begin{equation}\label{eq:roughschematic}
\Box u = F(u)\nabla_{t,x}u\cdot \nabla_{t,x}u,
\end{equation}
although this is an oversimplification as the true underlying wave equation displays non local-expressions. It has been known now for a while, see \cite{Ste},  that \eqref{eq:roughschematic} admits global solutions corresponding to initial data of small critical, i. e. scaling invariant, Besov $\dot{B}^{\frac{n}{2},1}_2$ norm, provided one restricts oneself to spatial dimensions $n\geq 6$, and that passing to lower dimensions appears to require some sort of null-structure. 
Here, we show that \eqref{eq:fracwm} does have enough of an intrinsic null-structure to allow for the following 
\begin{thm}\label{thm:Main} Let $n\geq 5$. Let $u[0] = (u(0, \cdot), u_t(0, \cdot)) = (u_0, u_1): \R^{n}\longrightarrow S^2\times TS^2$ a smooth data pair with $u_1 = u_0\times (-\triangle)^{\frac12}u_0$, and such that $u_0$ is constant outside of a compact subset of $\R^n$ (this condition in particular ensures that $(-\triangle)^{\frac12}u_0$ is well-defined). Also, assume the smallness condition 
\[
\big\|u[0]\big\|_{\dot{B}^{\frac{n}{2},1}_2\times \dot{B}^{\frac{n}{2}-1,1}_2}<\epsilon
\]
where $\epsilon\ll 1$ sufficiently small. Then problem \eqref{eq:fracwm} admits a global smooth solution. 
\end{thm}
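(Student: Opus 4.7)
The plan is to reduce \eqref{eq:fracwm} to a second-order nonlinear wave equation whose leading nonlinearity is exactly the classical $S^2$ wave-maps $Q_0$ null form, and then to close a contraction argument in a critical Besov-Strichartz space by combining Sterbenz-type linear estimates with bilinear null-form refinements. First I would differentiate \eqref{eq:fracwm} in $t$, substitute back once, and use the triple-product identity together with the constraint $|u|^2=1$ (which yields $u\cdot\triangle u=-|\nabla u|^2$) and the squaring identity $|(-\triangle)^{\frac12}u|^2=|\partial_t u|^2+(u\cdot(-\triangle)^{\frac12}u)^2$ that comes from \eqref{eq:fracwm} itself. The outcome is an equation of the form
\[
\Box u = -u\,\partial^\alpha u\cdot\partial_\alpha u + (-\triangle)^{\frac12}u\,(u\cdot(-\triangle)^{\frac12}u) - u\,(u\cdot(-\triangle)^{\frac12}u)^2 + u\times[(-\triangle)^{\frac12},u\times](-\triangle)^{\frac12}u,
\]
whose first term is the exact wave-maps null form and whose remaining three terms have the same scaling as $F(u)\,\nabla u\cdot\nabla u$ once the commutator is expanded.

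The next step is to exploit the fractional constraint. Applying $(-\triangle)^{\frac12}$ to $|u|^2=1$ and writing the Leibniz defect $\Pi(f,g):=f\,(-\triangle)^{\frac12}g+g\,(-\triangle)^{\frac12}f-(-\triangle)^{\frac12}(fg)$ gives $u\cdot(-\triangle)^{\frac12}u=\tfrac12\Pi(u,u)$. This is a genuine high-high bilinear object which, in a Littlewood-Paley decomposition, essentially lives on parallel frequencies and therefore admits the sharp bilinear estimates one needs. The commutator $[(-\triangle)^{\frac12},u\times]$ is likewise of Coifman-Meyer type and, after paraproduct expansion, lets one rewrite the last term of the reduced equation schematically as $F(u)\,\nabla u\cdot\nabla u$ modulo pieces with additional fractional smoothing. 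In this way every nonlinear contribution becomes bilinear in $\nabla_{t,x}u$ decorated by a smooth function of $u$, with the leading piece carrying $Q_0$ null structure and the remainder amenable to the same bilinear analysis.

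The iteration then proceeds in a critical space along the lines of \cite{Ste,T1}, built on $L^\infty_t \dot{B}^{n/2,1}_2\times L^\infty_t \dot{B}^{n/2-1,1}_2$ and augmented with dyadic Strichartz components. Sterbenz's estimate already closes the semilinear piece without any null structure for $n\geq 6$; to descend to $n=5$ one invokes the Klainerman-Machedon/Tataru bilinear $Q_0$ estimate at each pair of dyadic frequencies, gaining the factor $(\lambda_{\min}/\lambda_{\max})^{\frac12}$ in the parallel-frequency regime that recovers the missing power of the dimension. The compatibility condition $u_1=u_0\times(-\triangle)^{\frac12}u_0$ ensures that the solution of the derived second-order equation stays on $S^2$, so that the two formulations remain equivalent, and a standard fixed-point argument yields the global solution for $\epsilon\ll 1$.

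The main obstacle I anticipate is the commutator term $u\times[(-\triangle)^{\frac12},u\times](-\triangle)^{\frac12}u$: a crude paraproduct expansion loses a derivative on the low-frequency factor, and one must balance the fractional smoothing supplied by the commutator against the null-form gain on the high-high interaction in order to reach $n=5$ rather than merely $n\geq 6$. In tandem, the scalar factor $u\cdot(-\triangle)^{\frac12}u$, which appears both linearly and quadratically in the reduced equation, is not small at the naive Besov scaling of its individual dyadic pieces and must be controlled through the Leibniz-defect reformulation above, uniformly across all frequency configurations. Reconciling these two nonlocal features is the essential technical heart of the argument and is precisely what forces the restriction $n\geq 5$.
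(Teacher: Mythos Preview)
Your overall strategy is essentially the one the paper follows: pass to a second-order wave equation whose leading part is the $S^2$ wave-maps null form, exploit the constraint $|u|^2=1$ to tame the scalar $u\cdot(-\triangle)^{1/2}u$ (the paper does this via the Littlewood-Paley identity $0=\sum_{|k_1-k_2|\le 10}u_{k_1}\cdot u_{k_2}+2\sum_{k_1}u_{k_1}\cdot u_{<k_1-10}$, which is exactly your Leibniz-defect observation in dyadic form), and close a fixed point in a critical Besov space built on Strichartz norms together with an $X^{s,b}$ component to handle the $Q_0$ null form down to $n=5$. Your identification of the commutator term as the delicate piece, and of the low-high interaction as the place where the constraint must be invoked again, matches the paper's Proposition~4.1 precisely.

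There is, however, a genuine gap in your final paragraph. The compatibility condition $u_1=u_0\times(-\triangle)^{1/2}u_0$ does \emph{not} by itself ensure that the two formulations remain equivalent. Solving the second-order equation with sphere-valued data and showing the solution stays on $S^2$ (which the paper arranges by projecting the non-wave-maps source terms via $\Pi_{u_\perp}$ in the iteration) still leaves open whether the resulting $u$ actually satisfies the first-order equation \eqref{eq:fracwm}. The paper devotes a separate argument to this: one introduces $X:=u_t-u\times(-\triangle)^{1/2}u$, derives from the second-order equation a linear evolution for $X$ of the schematic form $\partial_t X = -X\times(-\triangle)^{1/2}u - u\times(-\triangle)^{1/2}X - u(X\cdot(\ldots))$, and runs a Gronwall-type energy estimate on $\|(-\triangle)^{\frac{n}{4}-\frac34}X\|_{L^2_x}^2$ to conclude $X\equiv 0$ from $X(0)=0$. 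Without this step your fixed point produces a solution of the derived wave equation, not of the half-wave maps equation; you should add this closing argument explicitly.
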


To prove this theorem, we shall have to reformulate \eqref{eq:fracwm} as a wave equation, which we do next. 
\begin{rem} We note that the restriction $n\geq 5$ comes from the fact that we use the $L_t^2 L_x^4$-Strichartz estimate, which is not available in spatial dimension $n = 4$. However, it is quite likely that this can be circumvented, and that the structures exhibited in this paper suffice to push the result to $n = 4$. However, both the issue of passing to the critical space $\dot{H}^{\frac{n}{2}}$, as well as going to lower spatial dimensions $n\leq 3$, appear non-trivial, as there are novel trilinear terms which no longer seem to have the same strong null-structure as the leading term coming from the Wave Maps equation. 

\end{rem}

\section{Passage to a wave equation}

Departing from \eqref{eq:fracwm}, we compute 
\begin{align*}
\partial_t^2 u &= \partial_t u \times (-\triangle)^{\frac{1}{2}}u + u \times (-\triangle)^{\frac{1}{2}}\partial_t u\\
&=(u\times (-\triangle)^{\frac{1}{2}}u)\times  (-\triangle)^{\frac{1}{2}}u + u \times (-\triangle)^{\frac{1}{2}}(u \times (-\triangle)^{\frac{1}{2}}u)
\end{align*}
Then using the basic formula $a\times (b\times c) = b(a\cdot c) - c(a\cdot b)$, we re-write the first term on the right as 
\begin{align*}
(u\times (-\triangle)^{\frac{1}{2}}u)\times  (-\triangle)^{\frac{1}{2}}u  =  -u ((-\triangle)^{\frac{1}{2}}u\cdot(-\triangle)^{\frac{1}{2}}u) + (-\triangle)^{\frac{1}{2}}u(u\cdot (-\triangle)^{\frac{1}{2}}u)
\end{align*}
For the second term on the right above, introducing a commutator term, we write it in the form 
\begin{align*}
&u \times (-\triangle)^{\frac{1}{2}}(u \times (-\triangle)^{\frac{1}{2}}u)\\
&=u \times (-\triangle)^{\frac{1}{2}}(u \times (-\triangle)^{\frac{1}{2}}u) - u \times(u \times (-\triangle)u)\\
&+u \times(u \times (-\triangle)u)\\
&=u \times (-\triangle)^{\frac{1}{2}}(u \times (-\triangle)^{\frac{1}{2}}u) - u \times(u \times (-\triangle)u)\\
&+u(u\cdot (-\triangle)u) +\triangle u
\end{align*}
Using the fact that $u\cdot u = 1$, whence 
\[
u\cdot \triangle u + \nabla u\cdot\nabla u = 0, 
\]
we arrive at the equation 
\begin{align*}
(\partial_t^2 - \triangle) u &= -u ((-\triangle)^{\frac{1}{2}}u\cdot(-\triangle)^{\frac{1}{2}}u) + (-\triangle)^{\frac{1}{2}}u(u\cdot (-\triangle)^{\frac{1}{2}}u)\\
&+u \times (-\triangle)^{\frac{1}{2}}(u \times (-\triangle)^{\frac{1}{2}}u) - u \times(u \times (-\triangle)u)\\
&+u(\nabla u\cdot\nabla u)
\end{align*}
Carefully note that $\nabla u$ here only involves the spatial derivatives. 
In order to make this appear closer to the Wave Maps equation and introduce better null-structure, we have to also make the time derivatives visible on the right hand side, for which the first line on the right hand side is pivotal. In fact, we get  
\begin{align*}
&\big(-u ((-\triangle)^{\frac{1}{2}}u\cdot(-\triangle)^{\frac{1}{2}}u) + (-\triangle)^{\frac{1}{2}}u(u\cdot (-\triangle)^{\frac{1}{2}}u)\big)\cdot u\\
&= - \big|u\times (-\triangle)^{\frac{1}{2}}u\big|^2 = -|\partial_tu|^2,
\end{align*}
and so the equation becomes 
\begin{equation}\label{eq:fracWMfinal}\begin{split}
(\partial_t^2 - \triangle) u &= u(\nabla u\cdot\nabla u - \partial_t u\cdot\partial_t u)\\
&+\Pi_{u_{\perp}}\big((-\triangle)^{\frac{1}{2}}u\big)(u\cdot (-\triangle)^{\frac{1}{2}}u)\\
&+u \times (-\triangle)^{\frac{1}{2}}(u \times (-\triangle)^{\frac{1}{2}}u) - u \times(u \times (-\triangle)u)
\end{split}\end{equation}
where $\Pi_{u_{\perp}}$ denotes projection onto the orthogonal complement of $u$. 
Thus we see that formally the nonlinearity involves the precise wave maps source term, as well as two error terms, which formally behave like 
\[
u\nabla u\cdot\nabla u
\]

\section{Technical preliminaries}

Our main tools shall be the classical Strichartz estimates, combined with some $X^{s,b}$-space technology. Specifically, we let $P_k$, $k\in Z$, be standard Littlewood-Paley multipliers on $\R^n$, and furthermore, we denote by $Q_j$, $j\in \Z$, multipliers which localise a space-time function $F(t, x)$ to dyadic distance $\sim 2^j$ from the light cone $\big|\tau\big| = \big|\xi\big|$ on the Fourier side. Specifically, letting $\tilde{F}(\tau, \xi)$ denote the space time Fourier transform of $F$, and letting $\chi\in C^\infty_0(\R_+)$ a smooth cutoff  satisfying 
\[
\sum_{k\in Z}\chi(\frac{x}{2^k}) = 1\,\forall x\in \R_+, 
\]
we set 
\[
\widetilde{Q_j F} = \chi\big(\frac{\big||\tau| - |\xi|\big|}{2^j}\big)\tilde{F}(\tau, \xi). 
\]
Here, $\tilde{F}(\tau, \xi)$ denotes the space-time Fourier transform. Using these ingredients one can then define the following norms: 
\[
\big\|u\big\|_{\dot{X}^{\frac{n}{2},\frac12,\infty}}: = \sup_{j\in Z}2^{\frac{j}{2}}\big\|\nabla_x^{\frac{n}{2}}Q_j u\big\|_{L_{t,x}^2},\,\big\|F\big\|_{\dot{X}^{\frac{n}{2}-1, -\frac12, 1}}: = \sum_{j\in Z}2^{-\frac{j}{2}}\big\|\nabla_x^{\frac{n}{2}-1}Q_j F\big\|_{L_{t,x}^2}
\]
In addition to these, we rely on the classical Strichartz norms, which are the mixed type Lebesgue norms $\big\|\cdot\big\|_{L_t^p L_x^q}$, $\frac1p + \frac{n-1}{2q}\leq \frac{n-1}{4}$, $p\geq 2$, where we shall always restrict to $n\geq 5$. Call such pairs $(p,q)$ admissible.
We can now define a norm controlling our solutions as follows: 
\begin{equation}\label{eq:Snorm}
\big\|u\big\|_{S}: = \sum_{k\in Z}\sup_{(p,q)\,\text{admissible}}2^{(\frac1p + \frac{n}{q}-1)k}\big\|\nabla_{t,x}P_ku\big\|_{L_t^p L_x^q} + \big\|\nabla_{t,x}P_k u\big\|_{\dot{X}^{\frac{n}{2}-1,\frac12,\infty}} =:\sum_{k\in Z}\big\|P_k u\big\|_{S_k}.
\end{equation}
We also introduce 
\begin{equation}\label{eq:Nnorm}
\big\|F\big\|_{N}: = \sum_{k\in Z}\big\|P_kF\big\|_{L_t^1\dot{H}^{\frac{n}{2}-1} + \dot{X}^{\frac{n}{2}-1,-\frac12,1}}
\end{equation}

Then the following inequality is by now completely standard, see e. g. \cite{Kri}, \cite{T1}, \cite{Tat3}: 
\begin{equation}\label{eq:energyinequality}
\big\| u\big\|_S\lesssim \big\|u[0]\big\|_{\dot{H}^{\frac{n}{2}}\times \dot{H}^{\frac{n}{2}-1}} + \big\|\Box u\big\|_{N}. 
\end{equation}

In order to deal with the nonlocal expressions such as $(-\triangle)^{\frac{1}{2}}(u \times (-\triangle)^{\frac{1}{2}}u)$, the following simple lemma shall be useful: 

\begin{lem}\label{lem:multilinestimates} Consider the following bilinear expression (where $\chi_{k_j}(\cdot)$ smoothly localises to the annulus $|\xi|\sim 2^{k_j}$)
\[
F(u, v)(x): = \int_{\R^{n}}\int_{\R^{n}}m(\xi, \eta)e^{ix\cdot(\xi+\eta)}\chi_{k_1}(\xi)\hat{u}(\xi)\chi_{k_2}(\eta)\hat{v}(\eta)d\xi d\eta
\]
where the multiplier $m(\xi, \eta)$ is $C^\infty$ with respect to the coordinates on the support of $\chi_{k_1}(\xi)\cdot \chi_{k_2}(\eta)$, and satisfies a point wise bounds 
\[
\big|m(\xi, \eta)\big|\leq \gamma\lesssim 1,\, \big|(2^{k_1}\nabla_{\xi})^i(2^{k_2}\nabla_{\eta})^jm(\xi, \eta)\big|\lesssim_{i,j} 1,\,\forall i,,j.  
\]
Then if $\big\|\cdot\big\|_{Z}, \big\|\cdot\big\|_{Y}, \big\|\cdot\big\|_{X}$ are translation invariant norms with the property that 
\[
\big\|u\cdot v\big\|_{Z}\leq \big\|u\big\|_{X}\cdot\big\|v\big\|_{Y}, 
\]
then it follows that 
\[
\big\|F(u, v)\big\|_{Z}\lesssim\gamma^{(1-)}\big\|P_{k_1}u\big\|_{X}\big\|P_{k_2}v\big\|_{Y}
\]
where the implied constant only depends on the size of finitely many derivatives of $m$. 
\end{lem}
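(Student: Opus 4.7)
The natural approach is to decompose the multiplier as a Fourier series on a box adapted to its frequency support, turning the bilinear operator into a (rapidly convergent) sum of ordinary products of translates of $P_{k_1}u$ and $P_{k_2}v$, and then to exploit the translation invariance of the norms $X,Y,Z$ together with the product estimate $\|u\cdot v\|_Z\leq \|u\|_X\|v\|_Y$.

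First I would rescale: set $\tilde m(\xi',\eta'):=m(2^{k_1}\xi',2^{k_2}\eta')$, which is smooth and supported in a fixed annular region (independent of $k_1,k_2$) contained in some cube $[-C,C]^{2n}$, with $|\tilde m|\leq \gamma$ and $|\nabla_{\xi',\eta'}^\alpha \tilde m|\lesssim_\alpha 1$ uniformly. Expand
\[
\tilde m(\xi',\eta')=\sum_{(a,b)\in\Z^{2n}} c_{a,b}\,e^{i\pi(a\cdot\xi'+b\cdot\eta')/C}
\]
on this cube. The Fourier coefficients satisfy both the trivial pointwise bound $|c_{a,b}|\lesssim\gamma$ (from $|\tilde m|\leq\gamma$) and, by integration by parts using the smoothness, $|c_{a,b}|\lesssim_N(1+|a|+|b|)^{-N}$ for any $N$.

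Inserting the series into the definition of $F(u,v)$ and changing variables back to $(\xi,\eta)$, the exponential factors $e^{i\pi a\cdot\xi/(C2^{k_1})}$ and $e^{i\pi b\cdot\eta/(C2^{k_2})}$ are exactly the Fourier symbols of translation by $a\pi/(C 2^{k_1})$ and $b\pi/(C2^{k_2})$. Hence $F(u,v)(x)$ is the sum
\[
F(u,v)(x)=\sum_{a,b} c_{a,b}\,\tau_{a,k_1}(P_{k_1}u)(x)\cdot\tau_{b,k_2}(P_{k_2}v)(x),
\]
where $\tau_{a,k}$ denotes the translation by $a\pi/(C 2^k)$. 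Applying the triangle inequality in $Z$, the product estimate, and the translation invariance of $X$ and $Y$, one obtains
\[
\big\|F(u,v)\big\|_Z \leq \Big(\sum_{a,b}|c_{a,b}|\Big)\big\|P_{k_1}u\big\|_X \big\|P_{k_2}v\big\|_Y.
\]

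The remaining (and only nontrivial) step is to show $\sum_{a,b}|c_{a,b}|\lesssim \gamma^{1-}$. For any cutoff $R>0$, splitting the sum by $|a|+|b|\lessgtr R$ and using the two bounds above gives
\[
\sum_{a,b}|c_{a,b}|\lesssim R^{2n}\gamma + R^{2n-N},
\]
for any $N>2n$. Optimizing in $R$ (i.e.\ choosing $R\sim \gamma^{-1/N}$) produces $\sum|c_{a,b}|\lesssim \gamma^{1-2n/N}$, and taking $N$ arbitrarily large yields the $\gamma^{(1-)}$ factor claimed. The main (and only real) obstacle in the argument is precisely this interpolation between the pointwise bound and the smoothness bound on $m$; every other step is a routine consequence of translation invariance and the hypothesised product estimate.
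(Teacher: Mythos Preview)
Your argument is correct and is essentially identical to the paper's proof: both expand the (localised, rescaled) multiplier as a Fourier series on an adapted box, rewrite $F(u,v)$ as a sum of products of translates of $P_{k_1}u$ and $P_{k_2}v$, and interpolate between the pointwise bound $|c_{a,b}|\lesssim\gamma$ and the smoothness bound $|c_{a,b}|\lesssim_N(1+|a|+|b|)^{-N}$ to sum the coefficients to $\gamma^{1-}$. The only cosmetic difference is that you rescale first and then Fourier-expand on a fixed cube, whereas the paper keeps the original variables and builds the scaling into the Fourier basis; also note that, strictly speaking, one expands $m\chi_{k_1}\chi_{k_2}$ (or a smooth extension of $m$ off the support) rather than $m$ alone, but this is implicit in your description.
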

\begin{proof} This follows by Fourier expansion of the multiplier $m(\xi, \eta)$: write
\begin{align*}
m(\xi, \eta)\chi_{k_1}(\xi)\chi_{k_2}(\eta) = \sum_{m\in Z^n}\sum_{p\in Z^n}a_{m p}e^{i(2^{-k_1} m\cdot\xi + 2^{-k_2}p\cdot\eta)}
\end{align*}
where we have 
\begin{align*}
&\big|a_{m_1 m_2}\big|\\
&\leq (2^{-k_1}|m| + 2^{-k_2}|p|)^{-Mn}\big\|\nabla_{\xi, \eta}^{Mn}\big[m(\xi, \eta)\chi_{k_1}(\xi)\chi_{k_2}\big]\big\|_{L_{\xi, \eta}^{\infty}}\\
&\lesssim_{M,n}[|m| + |p|]^{-Mn}
\end{align*}
while we also get the trivial bound $\big|a_{m_1 m_2}\big|\big|\leq \gamma$. It follows that 
\begin{align*}
&F(u, v)(x) = \\&\sum_{\substack{m,p\in Z^n\\ |m|+ |p|<\gamma^{-\frac{1}{nM}}}} a_{m p}\int_{\R^{n}}\int_{\R^{n}}\hat{u}(\xi)\hat{v}(\eta)e^{i([2^{-k_1} m+x]\cdot\xi + [2^{-k_2}p+x]\cdot\eta)}
\,d\xi d\eta\\
& + \sum_{\substack{m,p\in Z^n\\ |m|+ |p|\geq \gamma^{-\frac{1}{nM}}}} a_{m p}\int_{\R^{n}}\int_{\R^{n}}\hat{u}(\xi)\hat{v}(\eta)e^{i([2^{-k_1} m+x]\cdot\xi + [2^{-k_2}p+x]\cdot\eta)}
\,d\xi d\eta\\
\end{align*}
and so 
\begin{align*}
\big\|F(u, v)\big\|_{Z}\lesssim &\big\|u\big\|_{X}\big\|v\big\|_{Y}[\sum_{\substack{m,p\in Z^n\\ |m|+ |p|<\gamma^{-\frac{1}{nM}}}}\gamma + \sum_{\substack{m,p\in Z^n\\ |m|+ |p|\geq \gamma^{-\frac{1}{nM}}}}(|m| + |p|)^{-Mn}]\\
&\lesssim  \big\|u\big\|_{X}\big\|v\big\|_{Y}\gamma^{1-\frac{1}{M}}
\end{align*}
Here the constant $M$ may be chosen arbitrarily large (with implied constant depending on $M$). 
\end{proof}

\section{Multilinear estimates}

Here we gather the multilinear estimates which allow us to obtain a solution for \eqref{eq:fracWMfinal} by means of a suitable iteration scheme: 
\begin{prop}\label{prop:multilinear} Assume that $u$ takes values in $S^2$ and converges to $p\in S^2$ at spatial infinity. Then using the norms $\big\|\cdot\big\|_{S}, \big\|\cdot\big\|_{N}$ introduced in the previous section, we have the bounds 
\begin{equation}\label{eq:multilin1}
\big\|P_k\big[u(\nabla u\cdot\nabla u - \partial_t u\cdot\partial_t u)\big]\big\|_{N}\lesssim (1+\big\|u\big\|_{S})\big\|u\big\|_{S}\big(\sum_{k_1\in Z}2^{-\sigma|k-k_1|}\big\|P_{k_1}u\big\|_{S_{k_1}}\big)
\end{equation}
Furthermore, if $\tilde{u}$ maps into a small neighbourhood of $S^2$, we have the similar bound 
\begin{equation}\label{eq:multilin2}
\big\|P_k\big(\Pi_{\tilde{u}_{\perp}}\big((-\triangle)^{\frac{1}{2}}u\big)(u\cdot (-\triangle)^{\frac{1}{2}}u)\big)\big\|_{N}\lesssim \prod_{v = u,\tilde{u}} (1+\big\|v\big\|_{S})\big\|u\big\|_{S}\big(\sum_{k_1\in Z}2^{-\sigma|k-k_1|}\big\|P_{k_1}u\big\|_{S_{k_1}}\big)
\end{equation}
as well as 
\begin{equation}\label{eq:multilin3}\begin{split}
&\big\|P_k\big(\Pi_{\tilde{u}_{\perp}}\big[u \times (-\triangle)^{\frac{1}{2}}(u \times (-\triangle)^{\frac{1}{2}}u) - u \times(u \times (-\triangle)u)\big]\big)\big\|_{N}\\
&\lesssim \prod_{v = u,\tilde{u}} (1+\big\|v\big\|_{S})\big\|u\big\|_{S}\big(\sum_{k_1\in Z}2^{-\sigma|k-k_1|}\big\|P_{k_1}u\big\|_{S_{k_1}}\big).
\end{split}\end{equation}
We also have corresponding difference estimates: assuming that $u^{(j)}, j = 1,2$, map into $S^2$, while $\tilde{u}^{(j)}$ map into a small neighbourhood of $S^2$, then using the notation 
\[
\triangle_{1,2}F^{(j)} = F^{(1)} - F^{(2)}
\]
we have 
\begin{equation}\label{eq:multilin4}\begin{split}
&\big\|\triangle_{1,2}P_k\big[u^{(j)}(\nabla u^{(j)}\cdot\nabla u^{(j)} - \partial_t u^{(j)}\cdot\partial_t u^{(j)})\big]\big\|_{N}\\&\lesssim (1+\max_j\big\|u^{(j)}\big\|_{S})(\max_j\big\|u^{(j)}\big\|_{S})\big(\sum_{k_1\in Z}2^{-\sigma|k-k_1|}\big\|P_{k_1}u^{(1)} - P_ku^{(2)}\big\|_{S_{k_1}}\big)\\
& + (1+\max_j\big\|u^{(j)}\big\|_{S})(\big\|u^{(1)} - u^{(2)}\big\|_{S})\big(\max_j\sum_{k_1\in Z}2^{-\sigma|k-k_1|}\big\|P_{k_1}u^{(j)}\big\|_{S_{k_1}}\big)\\
\end{split}\end{equation}
and similarly 
\begin{equation}\label{eq:multilin5}\begin{split}
&\big\|P_k\triangle_{1,2}\big(\Pi_{\tilde{u}^{(j)}_{\perp}}\big((-\triangle)^{\frac{1}{2}}u^{(j)}\big)(u^{(j)}\cdot (-\triangle)^{\frac{1}{2}}u^{(j)})\big)\big\|_{N}\\&\lesssim \max_j\prod_{v = u^{(j)},\tilde{u}^{(j)}} (1+\big\|v\big\|_{S})\big\|u^{(j)}\big\|_{S}\big(\sum_{k_1\in Z}2^{-\sigma|k-k_1|}\big\|P_{k_1}u^{(1)} - P_{k_2}u^{(2)} \big\|_{S_{k_1}}\big)\\
& + \max_j\prod_{v = u^{(j)},\tilde{u}^{(j)}} (1+\big\|v\big\|_{S})\big\|u^{(1)}- u^{(2)}\big\|_{S}\big(\max_j\sum_{k_1\in Z}2^{-\sigma|k-k_1|}\big\|P_{k_1}u^{(j)}\big\|_{S_{k_1}}\big)\\
& +  \max_j(1+\big\|u^{(j)}\big\|_{S})\big\|\tilde{u}^{(1)}- \tilde{u}^{(2)}\big\|_{S}\big(\max_j\sum_{k_1\in Z}2^{-\sigma|k-k_1|}\big\|P_{k_1}u^{(j)}\big\|_{S_{k_1}}\big)\\
\end{split}\end{equation}
The analogous difference estimate for \eqref{eq:multilin3} is similar. 
In fact, in all these estimates the choice $\sigma = 1$ works. 
\end{prop}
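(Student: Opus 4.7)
The plan is to prove each of the five estimates by Littlewood--Paley decomposing every $u$-factor, running a frequency trichotomy (high-high, high-low, low-high), and invoking an algebraic structure specific to each nonlinearity. Three distinct mechanisms are in play: for \eqref{eq:multilin1}, the \emph{wave-maps null form} $Q_0(\phi,\psi) = \nabla\phi\cdot\nabla\psi - \partial_t\phi\,\partial_t\psi$, together with the identity $2Q_0(\phi,\psi) = \Box(\phi\psi) - \phi\,\Box\psi - \psi\,\Box\phi$; for \eqref{eq:multilin2}, the \emph{sphere constraint} $|u|^2 \equiv 1$, which yields $(-\triangle)^{1/2}(u\cdot u) = 0$; and for \eqref{eq:multilin3}, the operator identity $\bigl((-\triangle)^{1/2}\bigr)^2 = -\triangle$, which collapses the bracket into a single commutator
\[
u\times(-\triangle)^{1/2}\bigl(u\times(-\triangle)^{1/2}u\bigr) - u\times\bigl(u\times(-\triangle)u\bigr) = u\times\bigl[(-\triangle)^{1/2},\, u\times\bigr](-\triangle)^{1/2}u.
\]

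For \eqref{eq:multilin1}, a paraproduct expansion reduces matters to the classical bilinear null-form estimate for $Q_0(u,u)$, after which a Moser-type algebra bound on the $S_k$-norm absorbs the outer $u$ multiplier and produces the off-diagonal decay factor $2^{-|k-k_1|}$ with $\sigma=1$. The crucial high-high-to-low-output regime uses the $\Box$-identity together with $\dot X^{\frac{n}{2}-1,\pm\frac12,\cdot}$ duality, which redistributes the derivative loss between the two factors; the high-low and low-high regimes are handled directly by H\"older in the Strichartz norms built into $S_k$. The restriction $n\geq 5$ enters here through the need for the $L_t^2 L_x^4$ Strichartz endpoint.

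For \eqref{eq:multilin2}, the scalar factor is renormalized as
\[
u\cdot(-\triangle)^{1/2}u = \tfrac{1}{2}\bigl[2\,u\cdot(-\triangle)^{1/2}u - (-\triangle)^{1/2}(u\cdot u)\bigr],
\]
which is a bilinear Fourier multiplier in $(u,u)$ with symbol $m_2(\xi,\eta) = \tfrac12\bigl(|\xi|^{1/2}+|\eta|^{1/2}-|\xi+\eta|^{1/2}\bigr)$. A Taylor expansion shows $|m_2|\lesssim 2^{\min(k_1,k_2)/2}$ on the dyadic support $|\xi|\sim 2^{k_1}$, $|\eta|\sim 2^{k_2}$ in the high-low regime, producing an off-diagonal gain via Lemma \ref{lem:multilinestimates}; the high-high-to-low regime, where $m_2$ has no improved bound, is closed instead by Bernstein at the low output frequency combined with Strichartz at the two inputs. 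The outer factors $\Pi_{\tilde u_\perp}$ and the accompanying $(-\triangle)^{1/2}u$ are then absorbed through the Moser algebra bound, which is what produces the $\prod(1+\|v\|_S)$ prefactor and the $\sigma=1$ off-diagonal decay. For \eqref{eq:multilin3}, the commutator form contributes a bilinear symbol $\bigl(|\xi+\eta|^{1/2}-|\eta|^{1/2}\bigr)|\eta|^{1/2}$ acting on $(u,u)$, which is analyzed by exactly the same scheme using Lemma \ref{lem:multilinestimates}, and the outer $u\times(u\times\,\cdot)$ is again handled by the Moser algebra bound.

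The difference estimates \eqref{eq:multilin4}, \eqref{eq:multilin5}, and the (omitted) analogue of \eqref{eq:multilin3} follow from multilinearity by a standard telescoping: each multilinear expression is written as a sum of multilinear expressions in which exactly one factor has been replaced by the difference $u^{(1)}-u^{(2)}$ or $\tilde u^{(1)}-\tilde u^{(2)}$, and the relevant direct estimate is applied to each term. The main technical obstacle throughout is the wave-maps null-form bound \eqref{eq:multilin1} in the high-high-to-low regime, which forces the use of the full $\dot X^{s,b}$ apparatus and the Strichartz endpoint, and is responsible for the restriction $n\geq 5$; by contrast, once the algebraic identities in (b) and (c) are in place, the estimates \eqref{eq:multilin2} and \eqref{eq:multilin3} become relatively routine applications of Lemma \ref{lem:multilinestimates} combined with the Moser algebra.
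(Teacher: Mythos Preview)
Your plan for \eqref{eq:multilin1} and \eqref{eq:multilin2} is essentially on target and matches the paper's argument, modulo two slips. First, the regime that forces the null-form identity in \eqref{eq:multilin1} is not high--high--to--low (both inputs at frequency $\gg$ output): that case closes directly by $L_t^2L_x^4\times L_t^2L_x^4$ with no algebraic cancellation. The case requiring the $\Box$-identity and the $\dot X^{s,b}$ machinery is the low--high interaction: output at frequency $\sim 1$, one input also at $\sim 1$, the other at $\ll 1$. Second, $(-\triangle)^{1/2}$ has symbol $|\xi|$, not $|\xi|^{1/2}$, so your $m_2$ should read $\tfrac12(|\xi|+|\eta|-|\xi+\eta|)$ and the commutator symbol in \eqref{eq:multilin3} should be $(|\xi+\eta|-|\eta|)|\eta|$.

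The substantive gap is in \eqref{eq:multilin3}. The commutator rewrite is correct and in every frequency configuration redistributes the two derivatives as one on each inner factor; schematically you are left with $u\,(\nabla u\times\nabla u)$. But these derivatives are purely spatial---there is no $Q_0$ null form here---so you are now facing exactly the generic nonlinearity $u\,\nabla u\,\nabla u$ that, as noted in the introduction, closes by pure Strichartz only for $n\geq 6$. Concretely, in the low--high case $k_1\ll 0$, $k_2\sim 0$, output $\sim 0$, outer $u$ at very low frequency, none of the available Strichartz pairings for $\nabla u_{k_1}$ and $\nabla u_{k_2}$ sum in $k_1$ without a logarithmic loss.

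What the paper does to close \eqref{eq:multilin3} in $n=5$ is to go one step further after the commutator: expand the outer cross product via $a\times(b\times c)=b(a\cdot c)-c(a\cdot b)$, which produces scalar factors of the form $u_{<k-10}\cdot u_k$ and $u_{<k-10}\cdot(-\triangle)^{1/2}u_k$. Each of these is then rewritten using the frequency-localised sphere identity \eqref{eq:orthomicrolocalized}---the same mechanism you correctly invoke for \eqref{eq:multilin2}---converting it into a high--high paraproduct plus a term with a derivative shifted onto the low-frequency factor. So \eqref{eq:multilin3} requires \emph{both} the commutator gain \emph{and} the sphere constraint; your plan should incorporate this second step. (This also bears on the difference estimates: one must apply the sphere-constraint reformulation \emph{before} telescoping, so that each summand in the telescope still carries the needed structure.)
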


\begin{proof} We shall only deal in detail with the case $n = 5$, since the case $n\geq 6$ is simpler due to the better decay with respect to large frequencies. Also, we note that then the desired estimates follow for a slightly different functional framework from the paper \cite{Ste}. We observe that the proof of \eqref{eq:multilin1} is really quite standard and follows for example from \cite{Tat3}. For completeness's sake, we include a simple version here. 
\\

{\it{Proof of \eqref{eq:multilin1}}}.  To achieve it, we localise the second and third factor to frequency $\sim 2^{k_1}, 2^{k_2}$, respectively, and we shall restrict the output logarithmic frequency $k$ to size $0$. This is possible on account of the scaling invariance of the estimate. We shall obtain exponential gains in terms of these frequencies in certain cases, and summation over all allowed frequencies will result in the desired bound \eqref{eq:multilin1}. 
\\

{\it{(1): high high interactions $\max\{k_1, k_2\}>10$.}} This is schematically written as 
\[
P_0[u\nabla_{t,x}u_{k_1}\nabla_{t,x}u_{k_2}]
\]
Then if $k_1 = k_2+O(1)$, we estimate this by 
\begin{align*}
&\big\|P_0[u\nabla_{t,x}u_{k_1}\nabla_{t,x}u_{k_2}]\big\|_{L_t^1 L_x^2}\lesssim \big\|\nabla_{t,x}u_{k_1}\big\|_{L_t^2 L_x^4}\big\|\nabla_{t,x}u_{k_2}\big\|_{L_t^2 L_x^4}\\
&\lesssim 2^{-\frac32 k_1}\prod_{j=1,2}\big\|u_{k_j}\big\|_{S_{k_j}}
\end{align*}
If $k_2>k_1+10$, say then we estimate it by 
\begin{align*}
&\big\|P_0[u\nabla_{t,x}u_{k_1}\nabla_{t,x}u_{k_2}]\big\|_{L_t^1 L_x^2}
= \big\|P_0[P_{k_2+O(1)}u\nabla_{t,x}u_{k_1}\nabla_{t,x}u_{k_2}]\big\|_{L_t^1 L_x^2}\\
&\lesssim \big\|P_{k_2+O(1)}u\big\|_{L_t^2L_x^4}\big\|\nabla_{t,x}u_{k_2}\big\|_{L_t^2 L_x^4}\big\|\nabla_{t,x}u_{k_1}\big\|_{L_t^\infty L_x^2+L_{t,x}^\infty}\\
&\lesssim 2^{-\frac{5}{2}k_2}\big\|u_{k_2}\big\|_{S_{k_2}}\big\|u_{k_2+O(1)}\big\|_{S_{k_2+O(1)}}\big\|u_{k_1}\big\|_{S_{k_1}}. 
\end{align*}
The case $k_1>k_2+10$ is of course the same. Summation over the suitable ranges of $k_1$, $k_2$ implies \eqref{eq:multilin1} in this case with $\sigma = \frac32$. 
\\

{\it{(2): high low interactions $\max\{k_1, k_2\}<-10$.}} Here one places $\nabla_{t,x}u_{k_j}$, $j = 1,2$, into $L_t^2 L_x^\infty$ and $u = P_{O(1)}u$ into $L_t^\infty L_x^2$. 
\\

{\it{(3): low high interactions $\max\{k_1, k_2\}\in [-10,10]$.}} This is the most delicate case. We may assume that $k_1<k_2-10$, since else we argue as in {\it{(1)}}. Thus $k_2\in [-10,10]$. Note that then
\begin{align*}
&\big\|P_0[u_{\geq k_1-10}\nabla_{t,x}u_{k_1}\nabla_{t,x}u_{k_2}]\big\|_{L_t^1 L_x^2}
\lesssim \big\|u_{\geq k_1-10}\big\|_{L_t^2 L_x^\infty}\big\|\nabla_{t,x}u_{k_1}\big\|_{L_t^2 L_x^\infty}\big\|\nabla_{t,x}u_{k_2}\big\|_{L_t^\infty L_x^2}\\
&\lesssim \big\|u\big\|_{S}\big\|u_{k_2}\big\|_{S_{k_2}}\big\|u_{k_1}\big\|_{S_{k_1}}, 
\end{align*}
which can be summed over $k_1<k_2-10$. 
One similarly estimates 
\[
P_0[Q_{\geq k_1-10}u_{<k_1-10}\nabla_{t,x}u_{k_1}\nabla_{t,x}u_{k_2}].
\]
We have now reduced to estimating 
\[
P_0[Q_{<k_1-10}u_{<k_1-10}\partial_{\alpha}u_{k_1}\partial^{\alpha}u_{k_2}]
\]
Here note that 
\begin{align*}
&\big\|P_0[Q_{<k_1-10}u_{<k_1-10}\partial_{\alpha}u_{k_1}\partial^{\alpha}Q_{>k_1-10}u_{k_2}]\big\|_{L_t^1 L_x^2}
\lesssim \big\|\partial_{\alpha}u_{k_1}\big\|_{L_t^2 L_x^\infty}\big\|\partial^{\alpha}Q_{>k_1-10}u_{k_2}\big\|_{L_{t,x}^2}\\
&\lesssim \big\|u_{k_1}\big\|_{S_{k_1}}\big\|u_{k_2}\big\|_{S_{k_2}},
\end{align*}
again summable over $k_1<k_2-10$. 
Also, we get
\begin{align*}
&\big\|P_0[Q_{<k_1-10}u_{<k_1-10}\partial_{\alpha}Q_{\geq k_1+10}u_{k_1}\partial^{\alpha}Q_{<k_1-10}u_{k_2}]\big\|_{X^{\frac{n}{2}-1,-\frac12,1}}\\
&\lesssim \sum_{j\geq k_1+10}2^{-\frac{j}{2}}\big\|Q_j\nabla_{t,x}u_{k_1}\big\|_{L_t^2 L_x^\infty}\big\|\partial^{\alpha}Q_{<k_1-10}u_{k_2}\big\|_{L_t^\infty L_x^2}\\
&\lesssim \big\|u_{k_1}\big\|_{S_{k_1}}\big\|u_{k_2}\big\|_{S_{k_2}}
\end{align*}
and hence summable over $k_1<k_2-10$.
Finally, we expand the expression out using its null-structure: 
\begin{align*}
&2P_0[Q_{<k_1-10}u_{<k_1-10}\partial_{\alpha}Q_{<k_1+10}u_{k_1}\partial^{\alpha}Q_{<k_1-10}u_{k_2}]\\& =  P_0[Q_{<k_1-10}u_{<k_1-10}\Box\big(Q_{<k_1+10}u_{k_1}Q_{<k_1-10}u_{k_2}\big)]\\
& - P_0[Q_{<k_1-10}u_{<k_1-10}\Box Q_{<k_1+10}u_{k_1}Q_{<k_1-10}u_{k_2}]\\
& - P_0[Q_{<k_1-10}u_{<k_1-10}Q_{<k_1+10}u_{k_1}\Box Q_{<k_1-10}u_{k_2}]\\
\end{align*}
Then we bound each of these: 
\begin{align*}
&P_0[Q_{<k_1-10}u_{<k_1-10}\Box\big(Q_{<k_1+10}u_{k_1}Q_{<k_1-10}u_{k_2}\big)]\\
& = \Box P_0[Q_{<k_1-10}u_{<k_1-10}\big(Q_{<k_1+10}u_{k_1}Q_{<k_1-10}u_{k_2}\big)]\\
& - P_0[\nabla_{t,x}Q_{<k_1-10}u_{<k_1-10}\nabla_{t,x}\big(Q_{<k_1+10}u_{k_1}Q_{<k_1-10}u_{k_2}\big)]\\
& - P_0[\nabla_{t,x}^2Q_{<k_1-10}u_{<k_1-10}\big(Q_{<k_1+10}u_{k_1}Q_{<k_1-10}u_{k_2}\big)]\\
\end{align*}
The last two terms on the right can be easily placed into $L_t^1 L_x^2$ using the $L_t^2 L_x^\infty$ norm for the low-frequency factors, while for the first term on the right, we get 
\begin{align*}
&\big\|\Box P_0[Q_{<k_1-10}u_{<k_1-10}\big(Q_{<k_1+10}u_{k_1}Q_{<k_1-10}u_{k_2}\big)]\big\|_{X^{\frac{n}{2}-1,-\frac12,1}}\\
&\lesssim \sum_{j<k_1+20}2^{\frac{j}{2}}\big\|P_0Q_j[Q_{<k_1-10}u_{<k_1-10}\big(Q_{<k_1+10}u_{k_1}Q_{<k_1-10}u_{k_2}\big)]\big\|_{L_{t,x}^2}\\
&\lesssim 2^{\frac{k_1}{2}}\big\|Q_{<k_1+10}u_{k_1}\big\|_{L_t^2 L_x^\infty}\big\|Q_{<k_1-10}u_{k_2}\big\|_{L_t^\infty L_x^2}\\
&\lesssim \big\|u_{k_1}\big\|_{S_{k_1}}\big\|u_{k_2}\big\|_{S_{k_2}}. 
\end{align*}
Further, we get 
\begin{align*}
&\big\|P_0[Q_{<k_1-10}u_{<k_1-10}\Box Q_{<k_1+10}u_{k_1}Q_{<k_1-10}u_{k_2}]\big\|_{L_t^1 L_x^2}\\&\lesssim \big\|\Box Q_{<k_1+10}u_{k_1}\big\|_{L_t^2 L_x^4}\big\|Q_{<k_1-10}u_{k_2}\big\|_{L_t^2 L_x^4}\\
&\lesssim 2^{\frac{k_1}{4}}\big\|u_{k_1}\big\|_{S_{k_1}}\big\|u_{k_2}\big\|_{S_{k_2}}. 
\end{align*}
To close things, we also get 
\begin{align*}
&\big\| P_0[Q_{<k_1-10}u_{<k_1-10}Q_{<k_1+10}u_{k_1}\Box Q_{<k_1-10}u_{k_2}]\big\|_{L_t^1 L_x^2}\\&
\lesssim \big\|Q_{<k_1+10}u_{k_1}\big\|_{L_t^2 L_x^\infty}\big\|\Box Q_{<k_1-10}u_{k_2}\big\|_{L_{t,x}^2}
\lesssim \big\|u_{k_1}\big\|_{S_{k_1}}\big\|u_{k_2}\big\|_{S_{k_2}}
\end{align*}
and the desired bound follows again by summing over $k_1<k_2-10$. 
\\

{\it{Proof of \eqref{eq:multilin2}}}. Here we shall be able to get by only using Strichartz type norms, by taking advantage of the condition $u\cdot u = 1$. Using the standard Littlewood-Paley trichotomy, we have that 
\begin{equation}\label{eq:orthomicro}
0 = u\cdot u - p\cdot p = \sum_{|k_1-k_2|\leq 10} u_{k_1}u_{k_2} + 2\sum_{k_1}u_{k_1}\cdot u_{<k_1-10}
\end{equation}
This implies that 
\begin{equation}\label{eq:cancel}
0 =  \sum_{|k_1-k_2|<10} (-\triangle)^{\frac{1}{2}}\big(u_{k_1}u_{k_2}\big) + 2\sum_{k_1}(-\triangle)^{\frac{1}{2}}\big(u_{k_1}\cdot u_{<k_1-10}\big)
\end{equation}
Here the first term is better, since the outer derivative falls on the low-frequency output. We shall use this to replace the second term on the right by the first.  Write 
\begin{equation}\label{eq:miltilin2decomp}\begin{split}
\Pi_{\tilde{u}_{\perp}}\big((-\triangle)^{\frac{1}{2}}u\big)(u\cdot (-\triangle)^{\frac{1}{2}}u) &= \sum_{|k_1-k_2|\leq 10}\Pi_{\tilde{u}_{\perp}}\big((-\triangle)^{\frac{1}{2}}u\big)(u_{k_1}\cdot (-\triangle)^{\frac{1}{2}}u_{k_2})\\
&+ \sum_{k_1}\Pi_{\tilde{u}_{\perp}}\big((-\triangle)^{\frac{1}{2}}u\big)(u_{k_1}\cdot (-\triangle)^{\frac{1}{2}}u_{<k_1-10})\\
&+\sum_{k_2}\Pi_{\tilde{u}_{\perp}}\big((-\triangle)^{\frac{1}{2}}u\big)(u_{<k_2-10}\cdot (-\triangle)^{\frac{1}{2}}u_{k_2})\\
\end{split}\end{equation}
Then for the first term on the right we infer 
\begin{align*}
&\big\|P_0\big[ \sum_{|k_1-k_2|\leq 10}\Pi_{\tilde{u}_{\perp}}\big((-\triangle)^{\frac{1}{2}}u\big)(u_{k_1}\cdot (-\triangle)^{\frac{1}{2}}u_{k_2})\big]\big\|_{L_t^1 L_x^2}\\
&\lesssim \sum_{\substack{|k_1-k_2|\leq 10\\ k_1<-20}}\big\|P_{[-20,20]}\big(\Pi_{\tilde{u}_{\perp}}\big((-\triangle)^{\frac{1}{2}}u\big)\big)\big\|_{L_t^\infty L_x^2}\big\|u_{k_1}\big\|_{L_t^2 L_x^\infty}\big\|(-\triangle)^{\frac{1}{2}}u_{k_2}\big\|_{L_t^2 L_x^\infty}\\
& +  \sum_{\substack{|k_1-k_2|\leq 10\\ k_1\geq -20}}\big\|\Pi_{\tilde{u}_{\perp}}\big((-\triangle)^{\frac{1}{2}}u\big)\big\|_{L_t^\infty L_x^2+L_{t,x}^\infty}\big\|u_{k_1}\big\|_{L_t^2 L_x^4}\big\|(-\triangle)^{\frac{1}{2}}u_{k_2}\big\|_{L_t^2 L_x^4}\\
\end{align*}
Then using a further elementary frequency decomposition it is easy to see that
\begin{align*}
&\big\|P_{[-20,20]}\big(\Pi_{\tilde{u}_{\perp}}\big((-\triangle)^{\frac{1}{2}}u\big)\big)\big\|_{L_t^\infty L_x^2}\lesssim \sum _{k_3\in Z}2^{-|k_3|}\big\|P_{k_3}u\big\|_{S_{k_1}}(1+\big\|\tilde{u}\big\|_{S}),\\
&\big\|\big(\Pi_{\tilde{u}_{\perp}}\big((-\triangle)^{\frac{1}{2}}u\big)\big)\big\|_{L_t^\infty L_x^2+L_{t,x}^{\infty}}\lesssim \sum _{k_3\in Z}2^{-|k_3|}\big\|P_{k_3}u\big\|_{S_{k_1}}(1+\big\|\tilde{u}\big\|_{S})
\end{align*}
and so we obtain that 
\begin{align*}
&\sum_{\substack{|k_1-k_2|\leq 10\\ k_1<-20}}\big\|P_{[-20,20]}\big(\Pi_{\tilde{u}_{\perp}}\big((-\triangle)^{\frac{1}{2}}u\big)\big)\big\|_{L_t^\infty L_x^2}\big\|u_{k_1}\big\|_{L_t^2 L_x^\infty}\big\|(-\triangle)^{\frac{1}{2}}u_{k_2}\big\|_{L_t^2 L_x^\infty}\\
&\lesssim \sum_{\substack{|k_1-k_2|\leq 10\\ k_1<-20}}2^{\frac{k_2-k_1}{2}}\prod_{j=1,2}\big\|P_{k_j}u\big\|_{S_{k_j}}\big(\sum _{k_3\in Z}2^{-|k_3|}\big\|P_{k_3}u\big\|_{S_{k_1}}(1+\big\|\tilde{u}\big\|_{S})\big)\\
&\lesssim \big(\sum _{k_3\in Z}2^{-|k_3|}\big\|P_{k_3}u\big\|_{S_{k_1}}\big)\big\|u\big\|_{S}^2(1+\big\|\tilde{u}\big\|_{S}),
\end{align*}
as well as 
\begin{align*}
&\sum_{\substack{|k_1-k_2|\leq 10\\ k_1\geq -20}}\big\|\Pi_{\tilde{u}_{\perp}}\big((-\triangle)^{\frac{1}{2}}u\big)\big\|_{L_t^\infty L_x^2+L_{t,x}^\infty}\big\|u_{k_1}\big\|_{L_t^2 L_x^4}\big\|(-\triangle)^{\frac{1}{2}}u_{k_2}\big\|_{L_t^2 L_x^4}\\
&\lesssim \big(\sum_{\substack{|k_1-k_2|\leq 10\\ k_1\geq -20}}2^{-\frac{5}{2}k_1}\big\|u_{k_1}\big\|_{S_{k_1}}\big\|u_{k_2}\big\|_{S_{k_2}}\big)\sum _{k_3\in Z}2^{-|k_3|}\big\|P_{k_3}u\big\|_{S_{k_1}}(1+\big\|\tilde{u}\big\|_{S})\\
&\lesssim \big(\sum _{k_3\in Z}2^{-|k_3|}\big\|P_{k_3}u\big\|_{S_{k_1}}\big)\big\|u\big\|_{S}^2(1+\big\|\tilde{u}\big\|_{S}).
\end{align*}
This concludes the required bound for the first term on the right hand side of \eqref{eq:miltilin2decomp}. 
\\

Now we pass to the second term. We write it as a sum of three terms:
\begin{align*}
&\sum_{k_1}\Pi_{\tilde{u}_{\perp}}\big((-\triangle)^{\frac{1}{2}}u\big)(u_{k_1}\cdot (-\triangle)^{\frac{1}{2}}u_{<k_1-10})\\&= \sum_{k_1\geq 5}\Pi_{\tilde{u}_{\perp}}\big((-\triangle)^{\frac{1}{2}}u\big)(u_{k_1}\cdot (-\triangle)^{\frac{1}{2}}u_{<k_1-10})\\
& +  \sum_{k_1\in[-5,5]}\Pi_{\tilde{u}_{\perp}}\big((-\triangle)^{\frac{1}{2}}u\big)(u_{k_1}\cdot (-\triangle)^{\frac{1}{2}}u_{<k_1-10})\\
& +  \sum_{k_1<-5}\Pi_{\tilde{u}_{\perp}}\big((-\triangle)^{\frac{1}{2}}u\big)(u_{k_1}\cdot (-\triangle)^{\frac{1}{2}}u_{<k_1-10})\\
\end{align*}
Then we get 
\begin{align*}
&\big\|P_0\big( \sum_{k_1\geq 5}\Pi_{\tilde{u}_{\perp}}\big((-\triangle)^{\frac{1}{2}}u\big)(u_{k_1}\cdot (-\triangle)^{\frac{1}{2}}u_{<k_1-10})\big)\big\|_{L_t^1 L_x^2}\\
&\lesssim  \sum_{k_1\geq 5}\big\|P_{[k_1 - 5,k_1+5]}\big(\Pi_{\tilde{u}_{\perp}}\big((-\triangle)^{\frac{1}{2}}u\big)\big)\big\|_{L_t^\infty L_x^2}\big\|u_{k_1}\big\|_{L_t^2 L_x^\infty}\big\|(-\triangle)^{\frac{1}{2}}u_{<k_1-10}\big\|_{L_t^2 L_x^\infty}\\
&\lesssim  \sum_{k_1\geq 5, k_2<k_1-10}\sum_{k_3}2^{-\frac{3}{2}|k_3|}\big\|P_{k_3}u\big\|_{S_{k_3}}(1+\big\|\tilde{u}\big\|_{S})\big\|u_{k_1}\big\|_{S_{k_1}}\big\|u_{k_2}\big\|_{S_{k_2}}\\
&\lesssim \big(\sum_{k_3}2^{-\frac{3}{2}|k_3|}\big\|P_{k_3}u\big\|_{S_{k_3}}\big)(1+\big\|\tilde{u}\big\|_{S})\big\|u\big\|_{S}^2. 
\end{align*}
Similarly, for the term of intermediate $k_1$, we have 
\begin{align*}
&\big\|P_0\big[ \sum_{k_1\in[-5,5]}\Pi_{\tilde{u}_{\perp}}\big((-\triangle)^{\frac{1}{2}}u\big)(u_{k_1}\cdot (-\triangle)^{\frac{1}{2}}u_{<k_1-10})\big]\big\|_{L_t^1 L_x^2}\\
&\lesssim \sum_{k_1\in[-5,5]}\big\|P_{<10}\Pi_{\tilde{u}_{\perp}}\big((-\triangle)^{\frac{1}{2}}u\big)\big\|_{L_t^2 L_x^\infty}\big\|u_{k_1}\big\|_{S_{k_1}}\big\|(-\triangle)^{\frac{1}{2}}u_{<k_1-10}\big\|_{L_t^2 L_x^\infty},
\end{align*}
and one closes by observing that 
\[
\big\|P_{<10}\Pi_{\tilde{u}_{\perp}}\big((-\triangle)^{\frac{1}{2}}u\big)\big\|_{L_t^2 L_x^\infty}\lesssim (\big\|\tilde{u}\big\|_{S}+1)\big\|u\big\|_{S},\,\big\|(-\triangle)^{\frac{1}{2}}u_{<k_1-10}\big\|_{L_t^2 L_x^\infty}\lesssim \big\|u\big\|_{S}. 
\]
Finally, for the range of low $k_1<-5$, we place both $u_{k_1},  (-\triangle)^{\frac{1}{2}}u_{<k_1-10}$ into $L_t^2L_x^\infty$ and observe that 
\[
\big\|u_{k_1}\big\|_{L_t^2L_x^\infty}\big\| (-\triangle)^{\frac{1}{2}}u_{<k_1-10}\big\|_{L_t^2 L_x^\infty}\lesssim \big\|u_{k_1}\big\|_{S_{k_1}}\big\|u\big\|_{S}. 
\]
Then we close by using that 
\begin{align*}
&P_0\big(\sum_{k_1<-5}\Pi_{\tilde{u}_{\perp}}\big((-\triangle)^{\frac{1}{2}}u\big)(u_{k_1}\cdot (-\triangle)^{\frac{1}{2}}u_{<k_1-10})\big)\\
& = P_0\big(\sum_{k_1<-5}P_{[-2,2]}\big(\Pi_{\tilde{u}_{\perp}}\big((-\triangle)^{\frac{1}{2}}u\big)\big)(u_{k_1}\cdot (-\triangle)^{\frac{1}{2}}u_{<k_1-10})\big),\\
\end{align*}
as well as 
\[
\big\|P_{[-2,2]}\big(\Pi_{\tilde{u}_{\perp}}\big((-\triangle)^{\frac{1}{2}}u\big)\big)\big\|_{L_t^\infty L_x^2}\lesssim (1+\big\|\tilde{u}\big\|_{S})\big(\sum_{k_3}2^{-\frac{3}{2}|k_3|}\big\|P_{k_3}u\big\|_{S_{k_3}}\big). 
\]
This concludes the required bound for the second term on the right in \eqref{eq:miltilin2decomp}. 
\\

Finally, the third term in  \eqref{eq:miltilin2decomp} is the most delicate, as the derivative $(-\triangle)^{\frac12}$ lands on the higher -frequency term $u_{k_2}$. To deal with it, we note, using Lemma~\ref{lem:multilinestimates}, that the difference 
\begin{align*}
\sum_{k_2}\Pi_{\tilde{u}_{\perp}}\big((-\triangle)^{\frac{1}{2}}u\big)(u_{<k_2-10}\cdot (-\triangle)^{\frac{1}{2}}u_{k_2}) - \sum_{k_2}\Pi_{\tilde{u}_{\perp}}\big((-\triangle)^{\frac{1}{2}}u\big)(-\triangle)^{\frac{1}{2}}(u_{<k_2-10}\cdot u_{k_2})
\end{align*}
can be estimated like the second term on the right in \eqref{eq:miltilin2decomp}, and hence it suffices to bound 
\begin{align*}
&\sum_{k_2}\Pi_{\tilde{u}_{\perp}}\big((-\triangle)^{\frac{1}{2}}u\big)(-\triangle)^{\frac{1}{2}}(u_{<k_2-10}\cdot u_{k_2})\\& = -\sum_{|k_3 - k_4|<10}\frac12\Pi_{\tilde{u}_{\perp}}\big((-\triangle)^{\frac{1}{2}}u\big)(-\triangle)^{\frac{1}{2}}(u_{k_3}\cdot u_{k_4})
\end{align*}
where we have used \eqref{eq:orthomicro}. This term is again straightforward to estimate: we have 
\begin{align*}
&\big\|P_0\big[\sum_{\substack{|k_3 - k_4|<10\\ k_3<-20}}\frac12\Pi_{\tilde{u}_{\perp}}\big((-\triangle)^{\frac{1}{2}}u\big)(-\triangle)^{\frac{1}{2}}(u_{k_3}\cdot u_{k_4})\big]\big\|_{L_t^1 L_x^2}\\
&\lesssim \sum_{\substack{|k_3 - k_4|<10\\ k_3<-20}}\big\|P_{[-10,10]}\big[\Pi_{\tilde{u}_{\perp}}\big((-\triangle)^{\frac{1}{2}}u\big)\big]\big\|_{L_t^\infty L_x^2}\big\|(-\triangle)^{\frac{1}{2}}(u_{k_3}\cdot u_{k_4})\big\|_{L_t^1 L_x^\infty}
\end{align*}
and we close for the case $k_3<-20$ by observing that 
\begin{align*}
\sum_{\substack{|k_3 - k_4|<10\\ k_3<-20}}\big\|(-\triangle)^{\frac{1}{2}}(u_{k_3}\cdot u_{k_4})\big\|_{L_t^1 L_x^\infty}&\lesssim \sum_{\substack{|k_3 - k_4|<10\\ k_3<-20}}2^{k_3}\big\|u_{k_3}\big\|_{L_t^2L_x^\infty}\big\|u_{k_4}\big\|_{L_t^2L_x^\infty}\\
&\lesssim \big\|u\big\|_{S}^2, 
\end{align*}
as well as 
\[
\big\|P_{[-10,10]}\big[\Pi_{\tilde{u}_{\perp}}\big((-\triangle)^{\frac{1}{2}}u\big)\big]\big\|_{L_t^\infty L_x^2}\lesssim (1+\big\|\tilde{u}\big\|_{S})\sum_{k_3}2^{-\frac{3}{2}|k_3|}\big\|P_{k_3}u\big\|_{S_{k_3}}. 
\]
On the other hand, if $k_3>-20$, we place both $u_{k_{3,4}}$ into $L_t^2 L_x^4$. We omit the simple details. This finally concludes the bound of estimate \eqref{eq:multilin2}. 
\\

{\it{Proof of \eqref{eq:multilin3}}}. We commence by observing that we may in fact get rid of the outer operator $\Pi_{\tilde{u}^{\perp}}$, since one easily checks that 
\[
\big\|P_0\big[\Pi_{\tilde{u}^{\perp}}F\big]\big\|_{L_t^1 L_x^2}\lesssim (1+\big\|\tilde{u}\big\|_{S})\sum_{k_1}2^{-|k_1|}\big\|P_{k_1}F\big\|_{L_t^1\dot{H}^{\frac{n}{2}-1}}.
\]
Then assuming that we have proved the bound 
\[
\big\|P_{k_1}F\big\|_{L_t^1\dot{H}^{\frac{n}{2}-1}}\lesssim \sum_{k_2}2^{-\sigma|k_1-k_2|}\big\|P_{k_2}u\big\|_{S_{k_2}}
\]
for some $\sigma>1$, we then infer the bound 
\[
\big\|P_0\big[\Pi_{\tilde{u}^{\perp}}F\big]\big\|_{L_t^1 L_x^2}\lesssim (1+\big\|\tilde{u}\big\|_{S})\sum_{k_2}2^{-|k_2|}\big\|P_{k_2}u\big\|_{S_{k_2}}
\]
Next, localising the last two factors to dyadic frequencies, and the output to frequency $\sim 1$ as we may, we arrive at the expression 
\[
P_0\big[u \times (-\triangle)^{\frac{1}{2}}(u_{k_1} \times (-\triangle)^{\frac{1}{2}}u_{k_2}) - u \times(u_{k_1} \times (-\triangle)u_{k_2})\big]
\]
Then we first dispose of the easy cases: 
\\

{\it{Both frequencies large: $\max\{k_1,k_2\}>10$.}} If $k_1 = k_2+O(1)$, we simply place both high frequency factors into $L_t^2 L_x^4$, resulting in 
\begin{align*}
&\big\|P_0\big[u \times (-\triangle)^{\frac{1}{2}}(u_{k_1} \times (-\triangle)^{\frac{1}{2}}u_{k_2}) - u \times(u_{k_1} \times (-\triangle)u_{k_2})\big]\big\|_{L_t^1 L_x^2}\\
&\lesssim 2^{2{k_1}}\big\|P_{k_1}u\big\|_{L_t^2 L_x^4}\big\|u_{k_2}\big\|_{L_t^2 L_x^4}\lesssim 2^{2k_1 - (1+\frac{5}{2})k_1}\prod_{j=1,2}\big\|u_{k_j}\big\|_{S_{k_j}}, 
\end{align*}
whence we have 
\begin{align*}
&\big\|\sum_{k_1 = k_2+O(1)>10}P_0\big[u \times (-\triangle)^{\frac{1}{2}}(u_{k_1} \times (-\triangle)^{\frac{1}{2}}u_{k_2}) - u \times(u_{k_1} \times (-\triangle)u_{k_2})\big]\big\|_{L_t^1 L_x^2}\\
&\lesssim \sum_{k_1 = k_2+O(1)>10}2^{-\frac {3}{2}|k_1|}\prod_{j=1,2}\big\|u_{k_j}\big\|_{S_{k_j}}\lesssim \big(\sum_{k_1}2^{-\frac {3}{2}|k_1|}\big\|P_{k_1}u\big\|_{S_{k_1}}\big)\big\|u\big\|_{S}. 
\end{align*}
On the other hand, if $k_2\gg k_1$, we use 
\begin{align*}
&P_0\big[u \times (-\triangle)^{\frac{1}{2}}(u_{k_1} \times (-\triangle)^{\frac{1}{2}}u_{k_2}) - u \times(u_{k_1} \times (-\triangle)u_{k_2})\big]\\&
= P_0\big[P_{k_2+O(1)}u \times (-\triangle)^{\frac{1}{2}}(u_{k_1} \times (-\triangle)^{\frac{1}{2}}u_{k_2}) - P_{k_2+O(1)}u \times(u_{k_1} \times (-\triangle)u_{k_2})\big].
\end{align*}
Then place the first and third factor into $L_t^2 L_x^4$ and the middle factor into $L_t^\infty L_x^2 + L_{t,x}^{\infty}$. The case $k_2\ll k_1$ is similar. 
\\

{\it{Both frequencies small: $\max\{k_1,k_2\}<-10$.}} Here we observe that Lemma~\ref{lem:multilinestimates} allows us to place one derivative $(-\triangle)^{\frac12}$ onto the factor $u_{k_1}$, even if $k_1<k_2-10$. Thus we reduce to bounding the schematic expression 
\[
P_0\big[P_{[-5,5]}u \nabla_x u_{k_1}\nabla_x u_{k_2}\big], 
\]
which is straightforward since we can place the second and third factor into $L_t^2 L_x^\infty$. We omit the simple details. 
\\

{\it{One frequency intermediate, the other small}}:  $\max\{k_1,k_2\}\in [-10,10]$. This case is a bit more difficult, and we shall exploit the geometric structure of the expression. 
We split this further into two cases:
\\

{\it{(i): $k_1\in [-10,10], k_2<10$.}} Here the difference structure inherent in the term is not helpful. In fact, we can immediately estimate 
\begin{align*}
\big\|P_0\big[ u \times(u_{k_1} \times (-\triangle)u_{k_2})\big]\big\|_{L_t^1 L_x^2}
&\lesssim \big\|u_{k_1} \big\|_{L_t^2 L_x^4}\big\| (-\triangle)u_{k_2})\big\|_{L_t^2 L_x^4}\\
&\lesssim 2^{\frac{k_2}{4}}\big\|u_{k_2}\big\|_{S_{k_2}}\big\|u_{k_1}\big\|_{S_{k_1}}, 
\end{align*}
and here of course we can sum over $k_2<10$ to infer the desired bound. Next, using Lemma~\ref{lem:multilinestimates} allows us to replace the term 
\[
P_0\big[ u \times (-\triangle)^{\frac12}(u_{k_1} \times (-\triangle)^{\frac12}u_{k_2})\big]
\]
by 
\[
P_0\big[ u \times ((-\triangle)^{\frac12}u_{k_1} \times (-\triangle)^{\frac12}u_{k_2})\big]
\]
up to a term which is estimated like $P_0\big[ u \times(u_{k_1} \times (-\triangle)u_{k_2})\big]$. Before exploiting the algebraic structure of the term above, we reduce the first factor $u$ to frequency $<2^{k_2-10}$, which we can on account of 
\begin{align*}
&\big\|P_0\big[ u_{\geq k_2-10} \times ((-\triangle)^{\frac12}u_{k_1} \times (-\triangle)^{\frac12}u_{k_2})\big]\big\|_{L_t^1 L_x^2}\\&\lesssim \big\|u_{\geq k_2-10}\big\|_{L_t^2L_x^\infty}\big\|(-\triangle)^{\frac12}u_{k_1}\big\|_{L_t^\infty L_x^2}\big\| (-\triangle)^{\frac12}u_{k_2}\big\|_{L_t^2 L_x^\infty}\\
&\lesssim \big\|u_{k_1}\big\|_{S_{k_1}}\big\|u_{k_2}\big\|_{S_{k_2}}\big\|u\big\|_{S}. 
\end{align*}
 Summing over $k_2<10$ and recalling that $k_1\in [-10,10]$ leads to the desired bound. 
 \\
 Consider now the expression 
\[
P_0\big[ u_{<k_2-10} \times ((-\triangle)^{\frac12}u_{k_1} \times (-\triangle)^{\frac12}u_{k_2})\big]
\] 
Write this as 
\begin{align*}
&P_0\big[ u_{<k_2-10} \times ((-\triangle)^{\frac12}u_{k_1} \times (-\triangle)^{\frac12}u_{k_2})\big]\\
& = P_0\big[(-\triangle)^{\frac12}u_{k_1}( u_{<k_2-10} \cdot (-\triangle)^{\frac12}u_{k_2}) - (-\triangle)^{\frac12}u_{k_2}(u_{<k_2-10} \cdot(-\triangle)^{\frac12}u_{k_1})\big]
\end{align*}
In order to estimate this, we use a frequency localised version of \eqref{eq:orthomicro}. Specifically, we have 
\begin{equation}\label{eq:orthomicrolocalized}
0 = 2u_k\cdot u_{<k-10} + \sum_{k_1 = k_2+O(1)}P_k(u_{k_1}\cdot u_{k_2}) + 2^{-k}L(u_k, \nabla_x u_{<k-10}), 
\end{equation}
where $L$ is a bilinear operator of the form used in Lemma~\ref{lem:multilinestimates} with a bounded kernel $m(\xi, \eta)$. We conclude the schematic relation 
\begin{align*}
(-\triangle)^{\frac12}u_k\cdot u_{<k-10} =& -\frac12 (-\triangle)^{\frac12} \sum_{k_1 = k_2+O(1)}P_k(u_{k_1}\cdot u_{k_2})\\
& + L(u_k, \nabla_x u_{<k-10})
\end{align*}
It follows that we can write 
\begin{align*}
&P_0\big[(-\triangle)^{\frac12}u_{k_1}( u_{<k_2-10} \cdot (-\triangle)^{\frac12}u_{k_2})\\
& = -\frac12 P_0\big[(-\triangle)^{\frac12}u_{k_1}\sum_{k_3 = k_4+O(1)}(-\triangle)^{\frac12} P_{k_2}(u_{k_3}\cdot u_{k_4})\big]\\
& + P_0\big[(-\triangle)^{\frac12}u_{k_1}L(u_{k_2}, \nabla_x u_{<k_2-10})\big],
\end{align*}
and here we have (keeping in mind that $k_1\in [-10,10]$)
\begin{align*}
&\big\|P_0\big[(-\triangle)^{\frac12}u_{k_1}\sum_{k_3 = k_4+O(1)}(-\triangle)^{\frac12} P_{k_2}(u_{k_3}\cdot u_{k_4})\big]\big\|_{L_t^1 L_x^2}\\&
\lesssim 2^{k_2}\sum_{k_3 = k_4+O(1)\geq k_2}\big\|(-\triangle)^{\frac12}u_{k_1}\big\|_{L_t^\infty L_x^2}\big\|u_{k_3}\big\|_{L_t^2 L_x^\infty}\big\|u_{k_4}\big\|_{L_t^2 L_x^\infty}\\
&\lesssim \big\|(-\triangle)^{\frac12}u_{k_1}\big\|_{L_t^\infty L_x^2}\sum_{k_3 = k_4+O(1)\geq k_2}2^{k_2 - k_3}\big\|u_{k_3}\big\|_{S_{k_3}}\big\|u_{k_4}\big\|_{S_{k_4}}, 
\end{align*}
and here we can sum over $k_2<10$ to arrive at an upper bound of $\lesssim \big\|u_{k_1}\big\|_{S_{k_1}}\big\|u\big\|_{S}^2$, as desired. 
We also have the simple bound 
\begin{align*}
&\big\|P_0\big[(-\triangle)^{\frac12}u_{k_1}L(u_{k_2}, \nabla_x u_{<k_2-10})\big]\big\|_{L_t^1 L_x^2}\\&
\lesssim \big\|(-\triangle)^{\frac12}u_{k_1}\big\|_{L_t^\infty L_x^2}\big\|u_{k_2}\big\|_{L_t^2 L_x^\infty}\big\|\nabla_x u_{<k_2-10}\big\|_{L_t^2 L_x^\infty}\\
&\lesssim \big\|(-\triangle)^{\frac12}u_{k_1}\big\|_{L_t^\infty L_x^2}\big\|u_{k_2}\big\|_{S_{k_2}}\big\|u\big\|_{S}, 
\end{align*}
and summing over $k_2<10$, we arrive again at the bound $\lesssim \big\|u_{k_1}\big\|_{S_{k_1}}\big\|u\big\|_{S}^2$. This concludes the case {{(i)}}. 
\\

{\it{(ii)}}: $k_2\in [-10,10], k_1<10$. Proceeding in analogy to case {\it{(i)}}, we immediately reduce to the expression 
\[
P_0\big[u_{<k_1-10} \times (-\triangle)^{\frac{1}{2}}(u_{k_1} \times (-\triangle)^{\frac{1}{2}}u_{k_2}) - u_{<k_1-10} \times(u_{k_1} \times (-\triangle)u_{k_2})\big]
\]
Here we first note that on account of Lemma~\ref{lem:multilinestimates} we have 
\begin{align*}
&\big\|P_0\big[u_{<k_1-10} \times (-\triangle)^{\frac{1}{2}}(u_{k_1} \times (-\triangle)^{\frac{1}{2}}u_{k_2}) - (-\triangle)^{\frac{1}{2}}\big(u_{<k_1-10} \times (u_{k_1} \times (-\triangle)^{\frac{1}{2}}u_{k_2})\big)\big]\big\|_{L_t^1 L_x^2}\\
& \lesssim \big\|(-\triangle)^{\frac{1}{2}}u_{<k_1-10}\big\|_{L_t^2 L_x^\infty}\big\|u_{k_1}\big\|_{L_t^2 L_x^\infty}\big\|(-\triangle)^{\frac{1}{2}}u_{k_2}\big\|_{L_t^\infty L_x^2}\lesssim \big\|u\big\|_{S}\big\|u_{k_1}\big\|_{S_{k_1}}\big\|u_{k_2}\big\|_{S_{k_2}}, 
\end{align*}
Then summation over $k_1<10$ gives the required bound. 
\\

Next, we expand out 
\begin{equation}\label{eq:thefourterms}\begin{split}
&P_0\big[(-\triangle)^{\frac{1}{2}}\big(u_{<k_1-10} \times (u_{k_1} \times (-\triangle)^{\frac{1}{2}}u_{k_2})\big) - u_{<k_1-10} \times(u_{k_1} \times (-\triangle)u_{k_2})\big]\\
& = P_0(-\triangle)^{\frac{1}{2}}\big(u_{k_1}(u_{<k_1-10}\cdot (-\triangle)^{\frac{1}{2}}u_{k_2}) -  (-\triangle)^{\frac{1}{2}}u_{k_2}(u_{<k_1-10}\cdot u_{k_1})\big)\\
& - P_0\big(u_{k_1}( u_{<k_1-10}\cdot (-\triangle)u_{k_2}) - (-\triangle)u_{k_2}(u_{<k_1-10} \cdot u_{k_1})\big).
\end{split}\end{equation}
Then pairing up these last 4 terms suitably, we have 
\begin{align*}
&P_0(-\triangle)^{\frac{1}{2}}\big(u_{k_1}(u_{<k_1-10}\cdot (-\triangle)^{\frac{1}{2}}u_{k_2})\big) -  P_0\big(u_{k_1}( u_{<k_1-10}\cdot (-\triangle)u_{k_2})\big)\\
& = P_0(-\triangle)^{\frac{1}{2}}\big(u_{k_1}(-\triangle)^{\frac{1}{2}}(u_{<k_1-10}\cdot u_{k_2}) - P_0\big(u_{k_1}(-\triangle)( u_{<k_1-10}\cdot u_{k_2})\big)\\
& + u_{k_1}L(\nabla_x u_{<k_1-10}, u_{k_2})\\
& = L((-\triangle)^{\frac{1}{2}}u_{k_1}, (-\triangle)^{\frac{1}{2}}(u_{<k_1-10}\cdot u_{k_2})) +  u_{k_1}L(\nabla_x u_{<k_1-10}, u_{k_2}).\\
\end{align*}
The last term is straightforward since 
\begin{align*}
\big\| u_{k_1}L(\nabla_x u_{<k_1-10}, u_{k_2})\big\|_{L_t^1 L_x^2}&\lesssim \big\|u_{k_1}\big\|_{L_t^2 L_x^\infty}\big\|\nabla_x u_{<k_1-10}\big\|_{L_t^2 L_x^\infty}\big\|u_{k_2}\big\|_{L_t^\infty L_x^2}\\
&\lesssim \big\|u\big\|_{S}\big\|u_{k_1}\big\|_{S_{k_1}}\big\|u_{k_2}\big\|_{S_{k_2}}, 
\end{align*}
and we can sum over $k_1<10$. Further, we see that 
\begin{align*}
 L((-\triangle)^{\frac{1}{2}}u_{k_1}, (-\triangle)^{\frac{1}{2}}(u_{<k_1-10}\cdot u_{k_2})) =  L((-\triangle)^{\frac{1}{2}}u_{k_1}, (-\triangle)^{\frac{1}{2}}(u_{<k_2-10}\cdot u_{k_2})) + \text{error}, 
\end{align*}
where the term $\text{error}$ here is estimated exactly like the previous term.  But then taking advantage of \eqref{eq:orthomicrolocalized}, we find 
\begin{align*}
&L((-\triangle)^{\frac{1}{2}}u_{k_1}, (-\triangle)^{\frac{1}{2}}(u_{<k_2-10}\cdot u_{k_2}))\\
& = -\frac12 \sum_{k_3 = k_4+O(1)>k_2}L((-\triangle)^{\frac{1}{2}}u_{k_1}, (-\triangle)^{\frac{1}{2}}P_{k_2}(u_{k_3}\cdot u_{k_4}))\\
& + 2^{-k_2}L((-\triangle)^{\frac{1}{2}}u_{k_1}, (-\triangle)^{\frac{1}{2}}L(\nabla_x u_{<k_2-10}, u_{k_2})).\\
\end{align*}
Then we have 
\begin{align*}
&\big\| -\frac12 \sum_{k_3 = k_4+O(1)>k_2}L((-\triangle)^{\frac{1}{2}}u_{k_1}, (-\triangle)^{\frac{1}{2}}P_{k_2}(u_{k_3}\cdot u_{k_4}))\big\|_{L_t^1 L_x^2}\\
&\lesssim \sum_{k_3 = k_4+O(1)>k_2}2^{k_2}\big\|(-\triangle)^{\frac{1}{2}}u_{k_1}\big\|_{L_t^2 L_x^\infty}\big\|u_{k_3}\big\|_{L_t^2 L_x^\infty}\big\|u_{k_4}\big\|_{L_t^\infty L_x^2}
\end{align*}
The preceding sum can be further bounded by 
\begin{align*}
&\lesssim \sum_{k_3 = k_4+O(1)>k_2}2^{k_2}2^{\frac{k_1 - k_3}{2}}2^{-\frac{5}{2}k_4}\big\|u_{k_1}\big\|_{S_{k_1}}\big\|u_{k_3}\big\|_{S_{k_3}}\big\|u_{k_4}\big\|_{S_{k_4}}\\&\lesssim \big(\sum_{k_1}2^{-|k_4 - k_2|}\big\|u_{k_4}\big\|_{S_{k_4}}\big\|u\big\|_{S}\big)\big\|u_{k_1}\big\|_{S_{k_1}}. 
\end{align*}
This can be summed over $k_1<10$ to yield the desired kind of bound. 
\\

Finally, we have the simpler bound 
\begin{align*}
&\big\|2^{-k_2}L((-\triangle)^{\frac{1}{2}}u_{k_1}, (-\triangle)^{\frac{1}{2}}L(\nabla_x u_{<k_2-10}, u_{k_2}))\big\|_{L_t^1 L_x^2}\\&\lesssim 
\big\|(-\triangle)^{\frac{1}{2}}u_{k_1}\big\|_{L_t^2 L_x^\infty}\big\|\nabla_x u_{<k_2-10}\big\|_{L_t^2 L_x^\infty}\big\|u_{k_2}\big\|_{L_t^\infty L_x^2},
\end{align*}
which after summation over $k_1<10$ is again bounded by $\lesssim \big\|u\big\|_{S}^2\big\|u_{k_2}\big\|_{S_{k_2}}$. 
\\

Returning to \eqref{eq:thefourterms}, it remains to bound the difference 
\begin{align*}
&P_0(-\triangle)^{\frac{1}{2}}\big[(-\triangle)^{\frac{1}{2}}u_{k_2}(u_{<k_1-10}\cdot u_{k_1})\big] - P_0\big[(-\triangle)u_{k_2}(u_{<k_1-10} \cdot u_{k_1})\big]\\
& = -\frac12 L((-\triangle)^{\frac{1}{2}}u_{k_2}, \sum_{k_3 = k_4+O(1)>k_1}(-\triangle)^{\frac{1}{2}}P_{k_1}(u_{k_3}\cdot u_{k_4})\big)\\
& + L((-\triangle)^{\frac{1}{2}}u_{k_2}, (-\triangle)^{\frac{1}{2}}2^{-k_1}L(\nabla_x u_{<k_1-10}, u_{k_1}))
\end{align*}
Then the first term is bounded by 
\begin{align*}
&\big\|\frac12 L((-\triangle)^{\frac{1}{2}}u_{k_2}, \sum_{k_3 = k_4+O(1)>k_1}(-\triangle)^{\frac{1}{2}}P_{k_1}(u_{k_3}\cdot u_{k_4})\big)\big\|_{L_t^1 L_x^2}\\&\lesssim 
2^{k_1}\sum_{k_3 = k_4+O(1)>k_1}\big\|(-\triangle)^{\frac{1}{2}}u_{k_2}\big\|_{L_t^\infty L_x^2}\big\|u_{k_3}\big\|_{L_t^2 L_x^\infty}\big\|u_{k_4}\big\|_{L_t^2 L_x^\infty}\\
&\lesssim \big\|u_{k_2}\big\|_{S_{k_2}}\sum_{k_3 = k_4+O(1)>k_1}2^{k_1 - k_3}\big\|u_{k_3}\big\|_{S_{k_3}}\big\|u_{k_4}\big\|_{S_{k_4}}. 
\end{align*}
This expression can be summed over $k_1$ to give the desired bound. 
Similarly, we get 
\begin{align*}
&\big\| L((-\triangle)^{\frac{1}{2}}u_{k_2}, (-\triangle)^{\frac{1}{2}}2^{-k_1}L(\nabla_x u_{<k_1-10}, u_{k_1}))\big\|_{L_t^1 L_x^2}\\&\lesssim 
\big\|(-\triangle)^{\frac{1}{2}}u_{k_2}\big\|_{L_t^\infty L_x^2}\big\|\nabla_x u_{<k_1-10}\big\|_{L_t^2 L_x^\infty}\big\|u_{k_1}\big\|_{L_t^2 L_x^\infty}\\
&\lesssim \big\|u_{k_2}\big\|_{S_{k_2}}\big\|u_{k_1}\big\|_{S_{k_1}}\big\|u\big\|_{S}, 
\end{align*}
and summation over $k_1<10$ yields the desired bound. This concludes case {\it{(ii)}}, and thereby of \eqref{eq:multilin3}.
\\

The estimates \eqref{eq:multilin4}, \eqref{eq:multilin5} are proved similarly, after passing to the differences. One only needs to make sure to reformulate the terms as in the preceding using  \eqref{eq:orthomicro}, \eqref{eq:orthomicrolocalized}, before passing to the differences. 
\end{proof}

\section{The iteration scheme}

Here we solve \eqref{eq:fracWMfinal}. Specifically, we prove the following 
\begin{thm}\label{thm:intermediate} Let $n\geq 5$. Let $u[0] = (u, u_t): \R^{n}\longrightarrow S^2\times TS^2$ a smooth data pair with $u\cdot u_t = 0$ point wise, and such that $u$ is constant outside of a compact subset of $\R^n$. Also, assume the smallness condition 
\[
\big\|u[0]\big\|_{\dot{B}^{\frac{n}{2},1}_2\times \dot{B}^{\frac{n}{2}-1,1}_2}<\epsilon
\]
where $\epsilon\ll 1$ sufficiently small. Then problem \eqref{eq:fracWMfinal} admits a global smooth solution with these data. 
\end{thm}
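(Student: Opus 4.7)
The plan is to solve \eqref{eq:fracWMfinal} via a Picard iteration built on the multilinear estimates of Proposition \ref{prop:multilinear} and the linear energy inequality \eqref{eq:energyinequality}. Writing $\cN[u]$ for the trilinear right-hand side of \eqref{eq:fracWMfinal}, I would first take $u^{(0)}$ to be the free wave evolution of the data $u[0]$, for which \eqref{eq:energyinequality} yields $\big\|u^{(0)}\big\|_{S} \lesssim \epsilon$. For $n \geq 0$, define $u^{(n+1)}$ as the solution of the inhomogeneous linear wave equation
\[
\Box u^{(n+1)} = \cN[u^{(n)}], \qquad u^{(n+1)}[0] = u[0],
\]
with both the $u$- and $\tilde{u}$-slots inside $\cN$ occupied by the current iterate $u^{(n)}$.

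The inductive bound $\big\|u^{(n)}\big\|_S \leq 2C_0 \epsilon$ is propagated by combining \eqref{eq:energyinequality} with the estimates \eqref{eq:multilin1}, \eqref{eq:multilin2}, \eqref{eq:multilin3}, each applied with $u = \tilde{u} = u^{(n)}$. Summing the frequency-localised right-hand sides in the outer index $k$ by Schur's test against the decaying kernel $2^{-\sigma|k-k_1|}$, the three proposition bounds collapse to
\[
\big\|\cN[u^{(n)}]\big\|_N \lesssim \big(1 + \big\|u^{(n)}\big\|_S\big)^{2}\big\|u^{(n)}\big\|_S^3,
\]
so that $\big\|u^{(n+1)}\big\|_{S} \leq C_0 \epsilon + C_1 \big\|u^{(n)}\big\|_S^3$, which closes for $\epsilon$ sufficiently small. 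Cauchyness then follows from the difference estimates \eqref{eq:multilin4}, \eqref{eq:multilin5} (and the analogue for \eqref{eq:multilin3} asserted in the proposition): applied to $\Box(u^{(n+1)} - u^{(n)}) = \cN[u^{(n)}] - \cN[u^{(n-1)}]$ with vanishing initial data, they give $\big\|u^{(n+1)} - u^{(n)}\big\|_{S} \leq C_2 \epsilon^2 \big\|u^{(n)} - u^{(n-1)}\big\|_{S}$, a geometric contraction. The limit $u$ solves \eqref{eq:fracWMfinal} in the distributional sense, and smoothness of $u$ is obtained by re-running the same multilinear scheme with $\dot{B}^{\frac{n}{2}+s,1}_2$ in place of the critical norm for $s > 0$, which is permissible because the exponentially decaying weight $2^{-\sigma|k-k_1|}$ absorbs the additional polynomial factor $2^{sk}$ after summation.

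The main obstacle is the tension between the iteration and the standing assumption in Proposition \ref{prop:multilinear} that the $u$-slot be \emph{exactly} $S^2$-valued, which enters through the orthogonality identity \eqref{eq:orthomicro}. The iterates $u^{(n)}$ only satisfy $|u^{(n)}|^2 = 1$ up to an error of order $O(\epsilon^2)$. I would address this by observing that inside the proofs of \eqref{eq:multilin2} and \eqref{eq:multilin3} the identity \eqref{eq:orthomicro} is used purely in its Littlewood--Paley form, which remains valid for $u^{(n)}$ modulo a residual equal to the frequency localisation of $|u^{(n)}|^2 - 1$; this residual has $\dot{B}^{\frac{n}{2},1}_2$-norm of order $\epsilon^2$ by the same inductive hypothesis, and therefore contributes to the multilinear bound at strictly higher order, which is absorbable into the iteration. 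An alternative would be to modify the iteration so that each $u^{(n)}$ takes values in $S^2$ exactly (e.g.\ by composing with the nearest-point projection onto $S^2$, which is smooth near a point $u^{(0)}$ that itself stays $O(\epsilon)$-close to $S^2$ in $L^\infty$ thanks to the Besov embedding); either device suffices to close the scheme.
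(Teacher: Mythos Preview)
Your proposal identifies the right ingredients and the central obstacle, but the paper takes a different route precisely to avoid the difficulty you flag in your last paragraph. Rather than run a fully explicit Picard iteration and then repair the failure of $|u^{(n)}|^2=1$ after the fact, the paper designs the iteration so that every iterate $u^{(j)}$ lands exactly on $S^2$. Concretely, given $u^{(j-1)}$ on $S^2$, one defines $u^{(j)}$ \emph{implicitly} by
\[
\Box u^{(j)} = u^{(j)}\big(\partial^\alpha u^{(j)}\cdot\partial_\alpha u^{(j)}\big) + \Pi_{u^{(j)}_\perp}\big[\text{remaining source terms evaluated at }u^{(j-1)}\big],
\]
so that the wave-maps nonlinearity is kept implicit and the remaining source is forced perpendicular to $u^{(j)}$. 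Dotting with $u^{(j)}$ then gives $\Box(|u^{(j)}|^2-1)=(|u^{(j)}|^2-1)\,\partial^\alpha u^{(j)}\cdot\partial_\alpha u^{(j)}$ with vanishing Cauchy data, hence $|u^{(j)}|^2\equiv 1$. The implicit step is itself resolved by a sub-iteration $u^{(j,i)}$, which converges by \eqref{eq:multilin1}--\eqref{eq:multilin5}. This buys a clean application of Proposition~\ref{prop:multilinear} with its $S^2$-valuedness hypothesis intact at every stage, and no perturbative repair is needed.

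Your workaround (a) is plausible in spirit but is not a black-box consequence of the proposition as stated. The identities \eqref{eq:orthomicro} and \eqref{eq:orthomicrolocalized} are used inside the proofs of \eqref{eq:multilin2} and \eqref{eq:multilin3} to convert the dangerous low-high term $u_{<k-10}\cdot(-\triangle)^{1/2}u_k$ into a high-high sum; if $|u^{(n)}|^2-1$ is merely $O(\epsilon^2)$ in $S$, the residual produces new terms of the schematic form $\Pi_{\tilde u_\perp}((-\triangle)^{1/2}u)\,(-\triangle)^{1/2}P_k(|u^{(n)}|^2-1)$, which you would still have to estimate separately, with the correct frequency-localised gain in $k$, in order to close. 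That is an additional argument, not simply ``higher order and absorbable''. Workaround (b) changes the nonlinearity at each step and requires a further check that the limit of the projected iterates solves the unprojected equation \eqref{eq:fracWMfinal}. Either route can presumably be completed, but the paper's implicit scheme sidesteps both issues at the cost of a two-level iteration.
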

\begin{proof} We do this by means of a suitable iteration scheme: first, let $u^{(0)} = p$ where $p\in S^2$ is the limit of the initial data $u|_{t = 0}$ at spatial infinity. Then let $u^{(1)}$ be the Wave Map into $S^2$ with the given data (which is possible since $u_t(0, \cdot)\cdot u(0, \cdot) = 0$ from our assumption), thus solving 
\[
(\partial_t^2 - \triangle) u^{(1)} =  u^{(1)}(\nabla u^{(1)}\cdot\nabla u^{(1)} - \partial_t u^{(1)}\cdot\partial_t u^{(1)})
\]
 It is given by $u^{(1)} = p + \sum_{k\in Z}u_k$, and its existence follows via simple iteration from \eqref{eq:multilin1} and the corresponding difference estimate. Then we define the higher iterates $u^{(j)}, j\geq 2$, via the following iterative scheme: 
 \begin{equation}\label{eq:iterate}\begin{split}
&(\partial_t^2 - \triangle) u^{(j)}\\&= u^{(j)}(\nabla u^{(j)}\cdot\nabla u^{(j)} - \partial_t u^{(j)}\cdot\partial_t u^{(j)})\\
&+\Pi_{u_{\perp}^{(j)}}\big((-\triangle)^{\frac{1}{2}}u^{(j-1)}\big)(u^{(j-1)}\cdot (-\triangle)^{\frac{1}{2}}u^{(j-1)})\\
&+\Pi_{u_{\perp}^{(j)}}\big[u^{(j-1)} \times (-\triangle)^{\frac{1}{2}}(u^{(j-1)} \times (-\triangle)^{\frac{1}{2}}u^{(j-1)}) - u^{(j-1)} \times(u^{(j-1)} \times (-\triangle)u^{(j-1)})\big]
\end{split}\end{equation} 
This equation defines $u^{(j)}$ implicitly, and so to actually compute it, we have to run a sub-iteration
 \begin{equation}\label{eq:iterate1}\begin{split}
&(\partial_t^2 - \triangle) u^{(j,i)}\\&= u^{(j,i-1)}(\nabla u^{(j,i-1)}\cdot\nabla u^{(j,i-1)} - \partial_t u^{(j,i-1)}\cdot\partial_t u^{(j,i-1)})\\
&+\Pi_{u_{\perp}^{(j,i-1)}}\big((-\triangle)^{\frac{1}{2}}u^{(j-1)}\big)(u^{(j-1)}\cdot (-\triangle)^{\frac{1}{2}}u^{(j-1)})\\
&+\Pi_{u_{\perp}^{(j,i-1)}}\big[u^{(j-1)} \times (-\triangle)^{\frac{1}{2}}(u^{(j-1)} \times (-\triangle)^{\frac{1}{2}}u^{(j-1)}) - u^{(j-1)} \times(u^{(j-1)} \times (-\triangle)u^{(j-1)})\big]
\end{split}\end{equation} 
for $i\geq 1$, while $u^{(j,0)}$ is the free wave evolution of the data $u[0]$. 
Then we again have $u^{(j,i)} = p + \sum_k u^{(j,i)}_k$, and in particular each $u^{(j,i)}$ is close to $S^2$ with respect to the $L^\infty$ norm, while convergence with respect to $\big\|\cdot\big\|_S$ follows from Proposition~\ref{prop:multilinear}. We also get higher regularity of each $u^{(j,i)}$ and $u^{(j)}$ by differentiating the equation. 
\\

Our choice of iterative scheme \eqref{eq:iterate} implies 
\[
\Box(u^{(j)}\cdot u^{(j)} - 1) = (u^{(j)}\cdot u^{(j)} - 1)(\nabla u^{(j)}\cdot\nabla u^{(j)} - \partial_t u^{(j)}\cdot\partial_t u^{(j)})
\]
as well as $(u^{(j)}\cdot u^{(j)} - 1)[0] = (0,0)$, which inductively gives that $u^{(j)}$ maps into $S^2$ for all $j$. Finally, convergence of the $u^{(j)}$ with respect to $\|\cdot\|_{S}$ follows again via Proposition~\ref{prop:multilinear}. Differentiating \eqref{eq:iterate} then also gives higher regularity of the limit function $u$. The latter is then easily seen to solve \eqref{eq:fracWMfinal}. For later purposes, we also note that Proposition~\ref{prop:multilinear} in conjunction with the assumptions that $(u-p)\big|_{t = 0}\in C_0^\infty$ and $u_t|_{t = 0} = u\times (-\triangle)^{\frac12}|_{t = 0}$ imply that we have improved control over low frequencies: $u(t, \cdot)\in \dot{H}^{\frac{n}{2}-\frac12}, u_t(t, \cdot)\in \dot{H}^{\frac{n}{2}-\frac32}$ for all $t$. 
\end{proof}

\section{Proof of Theorem~\ref{thm:Main}}

It remains to show that the solution $u(t, x)$ obtained in Theorem~\ref{thm:intermediate} actually solves \eqref{eq:fracwm}. For this introduce the quantity 
\[
X: = u_t - u\times (-\triangle)^{\frac12}u,
\]
as well as the energy type functional 
\[
\tilde{E}(t): = \frac12\int_{\R^n}\big|(-\triangle)^{\frac{n}{4} - \frac{3}{4}}X(t,\cdot)\big|^2\,dx.
\]
Note that we have $\nabla_{t,x}u\in \dot{H}^{\frac{n}{2}-\frac32}$ as observed previously, and hence $\tilde{E}(t)$ is well defined and also continuously differentiable (on account of the higher regularity properties of $u$). Retracing the steps that led to the final wave equation \eqref{eq:fracWMfinal}, we deduce 
\[
\partial_tX = -X\times  (-\triangle)^{\frac12}u  - u\times(-\triangle)^{\frac12}X - u\big(X\cdot(u\times  (-\triangle)^{\frac12}u + u_t)\big),
\]
and so we deduce 
\begin{align*}
\frac{d}{dt}\tilde{E}(t) =& -\int_{\R^n}(-\triangle)^{\frac{n}{4} - \frac{3}{4}}\big(X\times  (-\triangle)^{\frac12}u +  u\times(-\triangle)^{\frac12}X\big)\cdot (-\triangle)^{\frac{n}{4} - \frac{3}{4}}X\,dx\\
& - \int_{\R^n}(-\triangle)^{\frac{n}{4} - \frac{3}{4}}\big(u\big(X\cdot(u\times  (-\triangle)^{\frac12}u + u_t)\big)\big)\cdot(-\triangle)^{\frac{n}{4} - \frac{3}{4}}X\,dx.\\
\end{align*}
Then we note that\footnote{Here $\frac{2n}{n-5}$ gets replaced by $\infty$ of $n=5$.} 
\begin{align*}
&\big\|(-\triangle)^{\frac{n}{4} - \frac{3}{4}}\big(X\times  (-\triangle)^{\frac12}u +  u\times(-\triangle)^{\frac12}X\big) - u\times(-\triangle)^{\frac{n}{4} - \frac{1}{4}}X \big\|_{L_x^2}\\&\lesssim \big\|(-\triangle)^{\frac{n}{4} - \frac{3}{4}}X\big\|_{L_x^2} \big\| (-\triangle)^{\frac12}u\big\|_{L_x^\infty} + \big\|X\big\|_{L_x^{\frac{2n}{3}}} \big\|(-\triangle)^{\frac{n}{4} - \frac{1}{4}}u\big\|_{L_x^{\frac{2n}{n-3}}}\\
& + \big\|(-\triangle)^{\frac{n}{4} - \frac{3}{4}}u\big\|_{L_x^{\frac{2n}{n-5}}} \big\|(-\triangle)^{\frac12}X\big\|_{L_x^{\frac{2n}{5}}}\lesssim_u \big\|(-\triangle)^{\frac{n}{4} - \frac{3}{4}}X\big\|_{L_x^2}^2
\end{align*}
on account of Sobolev's embedding and higher regularity of $u$, and further, we observe that 
\begin{align*}
&\int_{\R^n}\big(u\times(-\triangle)^{\frac{n}{4} - \frac{1}{4}}X\big)\cdot (-\triangle)^{\frac{n}{4} - \frac{3}{4}}X\,dx\\
& = \int_{\R^n}(-\triangle)^{\frac14}\big(u\times(-\triangle)^{\frac{n}{4} - \frac{2}{4}}X\big)\cdot (-\triangle)^{\frac{n}{4} - \frac{3}{4}}X\,dx + O(\big\|(-\triangle)^{\frac{n}{4} - \frac{3}{4}}X\big\|_{L_x^2}^2\big\|\nabla_x u\big\|_{L_x^\infty})\\
& =  O(\big\|(-\triangle)^{\frac{n}{4} - \frac{3}{4}}X\big\|_{L_x^2}^2\big\|\nabla_x u\big\|_{L_x^\infty}).
\end{align*}
Similarly, we infer 
\begin{align*}
&\big| \int_{\R^n}(-\triangle)^{\frac{n}{4} - \frac{3}{4}}\big(u\big(X\cdot(u\times  (-\triangle)^{\frac12}u + u_t)\big)\big)\cdot(-\triangle)^{\frac{n}{4} - \frac{3}{4}}X\,dx\big|\\&\lesssim_u \big\|(-\triangle)^{\frac{n}{4} - \frac{3}{4}}X\big\|_{L_x^2}^2.
\end{align*}
But then the preceding implies that 
\[
\frac{d}{dt}\tilde{E}(t)\leq C(u)\tilde{E}(t)
\]
and furthermore $\tilde{E}(0) = 0$, which implies $\tilde{E}(t) = 0$ throughout. It follows that $X = 0$ identically, which completes the proof of Theorem~\ref{thm:Main}.

\centerline{\scshape Joachim Krieger }
\medskip
{\footnotesize
 \centerline{B\^{a}timent des Math\'ematiques, EPFL}
\centerline{Station 8, 
CH-1015 Lausanne, 
  Switzerland}
  \centerline{\email{joachim.krieger@epfl.ch}}
} 

\medskip

\centerline{\scshape Yannick Sire}
\medskip
{\footnotesize
 \centerline{Department of Mathematics, Johns Hopkins University}
\centerline{3400 N. Charles Street, Baltimore, MD 21218 U.S.A.}
\centerline{\email{sire@math.jhu.edu}
}
} 

\end{document}